\DeclarePairedDelimiter{\ceil}{\lceil}{\rceil}
\newtheorem{theorem}{Theorem}[section]
\newtheorem{question}[theorem]{Question}
\newtheorem{lemma}[theorem]{Lemma}
\newtheorem{proposition}[theorem]{Proposition}
\newtheorem{algorithm}[theorem]{Algorithm}
\newtheorem{corollary}[theorem]{Corollary}
\newtheorem{conjecture}[theorem]{Conjecture}
\newtheorem{fact}[theorem]{Fact}
\newtheorem{problem}[theorem]{Problem}
\newtheorem{definition}[theorem]{Definition}
\newtheorem{example}[theorem]{Example}
\title{Distinguishing Numbers and Generalizations}
\author{Caleb Ji}
\date{July 30, 2018}
\begin{document}
\vspace{-3cm}
\maketitle

\begin{abstract}
The distinguishing number of a graph was introduced by Albertson and Collins in~\cite{Alb1} as a measure of the amount of symmetry contained in the graph.  Tymoczko extended this definition to faithful group actions on sets in~\cite{Tymoczko}; taking the set to be the vertex set of a graph and the group to be the automorphism group of the graph allows one to recover the previous definition.  Since then, several authors have studied properties of the distinguishing number as well as extensions of the notion.  In this paper, we first answer a few open questions regarding the distinguishing number.  Next we turn to generalizations regarding the labeling of Cartesian powers of a set and the different subgroups that can be obtained through labelings.  We then introduce a new partially ordered set on partitions that follows naturally from extending the theory of distinguishing numbers to that of distinguishing partitions.  Then we investigate the groups obtainable from partitioning Cartesian powers of a set in more detail and show how the original notion of the distinguishing number of a graph can be recovered in this way.  Next, we introduce a polynomial and a symmetric function generalization of the distinguishing number.  Finally, we present a large number of open questions and problems for further research.
\end{abstract}

\section{Introduction}
The distinguishing number of a graph is one way of measuring the amount of symmetry it possesses.  Albertson and Collins defined an $r$-distinguishing labeling of a graph $G$ to be a function $\phi: V(G)\rightarrow \{1, 2, \ldots, r\}$ such that the only element $f\in \operatorname{Aut}(G)$ that preserves the labels is the identity~\cite{Alb1}.  The distinguishing number is defined as the smallest $r$ for which such a distinguishing labeling exists.  For instance, if $G=K_n$, then a different label is required for each vertex, and $D(G)=n$.  If $G$ has trivial automorphism group, then $D(G)=1$.  Furthermore, the distinguishing number of a graph is equal to that of its complement. \\

A classical example is that of the cycle graph on $n$ vertices, which has automorphism group $D_n$.  Because it is easy to visualize the symmetries, one can imagine how $3$ labels are necessary to remove all the symmetries for $n=3,4,5$, but that $2$ labels suffice for larger $n$.

\begin{figure}[H]
\label{ex1}

\begin{subfigure}{.5\textwidth}
\begin{tikzpicture}
\draw[fill=black] (0,0) circle (3pt);
\draw[fill=black] (3,0) circle (3pt);
\draw[fill=black] (4,2.8) circle (3pt);
\draw[fill=black] (1.5,4.75) circle (3pt);
\draw[fill=black] (-1,2.8) circle (3pt);
\node at (-0.25,0) {1};
\node at (3.25,0) {2};
\node at (4.25,2.8) {3};
\node at (1.75,4.75) {1};
\node at (-1.25,2.8) {2};
\draw (0,0) -- (3,0) -- (4,2.8) -- (1.5,4.75) -- (-1,2.8) -- (0,0);
\end{tikzpicture}
\end{subfigure}
\begin{subfigure}{.5\textwidth}
  \begin{tikzpicture}
\draw[fill=black] (0,0) circle (3pt);
\draw[fill=black] (3,0) circle (3pt);
\draw[fill=black] (4.5,2.6) circle (3pt);
\draw[fill=black] (3,5.2) circle (3pt);
\draw[fill=black] (0,5.2) circle (3pt);
\draw[fill=black] (-1.5,2.6) circle (3pt);
\node at (-0.25,0) {1};
\node at (3.25,0) {1};
\node at (4.75,2.6) {2};
\node at (3.25,5.2) {2};
\node at (-0.25,5.2) {1};
\node at (-1.75,2.6) {2};
\draw (0,0) -- (3,0) -- (4.5,2.6) -- (3,5.2) -- (0,5.2) -- (-1.5,2.6) -- (0,0);
\end{tikzpicture}
\end{subfigure}
\caption{Minimal distinguishing labelings for the cycle graph for $n=5, 6$}
\end{figure}
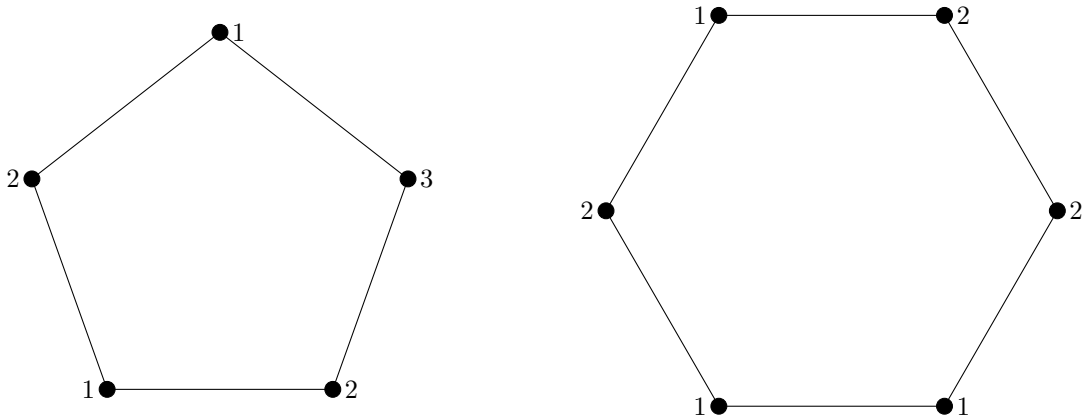

Tymoczko extended this definition to arbitrary faithful group actions in~\cite{Tymoczko}.  Taking any group $G$ to act on a set $X$, a distinguishing labeling of $X$ with respect to the action of $G$ is defined to be one such that only the identity element in $G$ preserves the labels of $X$.  The action is required to be faithful, or else there will always be a non-identity element that acts trivially on the set. The case of a graph is obtained when $X$ is taken to be the vertex set of a graph and $G$ is taken to be its automorphism group.  This more general case yields different results, as not all subgroups of $S_n$ can be realized as an automorphism group of a graph with $n$ vertices.  For instance, there is a faithful action of $S_4$ with distinguishing number 3, but no graph with automorphism group $S_4$ and distinguishing number 3~\cite{Tymoczko}. \\

A large body of research has been devoted to studying properties of the distinguishing number~\cite{Alb2, Bog, Chan1, Chan2, Imrich}.  For instance, in~\cite{Sharma}, the authors investigated the distinguishing numbers of the wreath product groups $\overrightarrow{S_n}$ and  $\overrightarrow{A_n}$.  There are several other interesting avenues for research on this topic; for example, in~\cite{Bird}, the authors presented a new algorithm that generates all distinguishing labelings.  In this paper, we begin by resolving a few conjectures made by previous authors on this topic.  These lead into several natural generalizations of this topic which we define and provide the basic theory.  These lead to many new directions that can be explored. \\

The organization of this paper is as follows.  In Section~\ref{dcg}, we resolve a conjecture regarding so-called distinguishing critical graphs.  In Section~\ref{sn}, we answer a question of Tymoczko in~\cite{Tymoczko}.  In Section~\ref{gen1}, we consider two generalizations of the problem at hand, namely involving multiple labelings and obtaining subgroups other than the identity after making the partitions. In Section~\ref{poset}, we generalize the notion of the distinguishing number to that of a distinguishing partition, and introduce a new partial ordering on the partitions of $n$ that naturally follows from it.  In Section~\ref{gen}, we present another generalization of all these ideas which opens the way for techniques from representation theory to study the more general problems.

\section{Distinguishing critical graphs}
\label{dcg}
In~\cite{Ali}, Alikhani and Soltani introduced the notion of a distinguishing critical graph.  We reproduce their definition here:

\begin{definition}
[Distinguishing critical graph] A distinguishing critical graph $G$ is one such that for every induced subgraph $H$ of $G$, $D(H)\neq D(G)$.
\end{definition}

Note that an induced subgraph $H$ of $G$ may have a larger distinguishing number than $G$.  For example, if $H$ is the complete graph on 6 vertices and $G$ consists of $H$ and another vertex that is connected to three vertices from $H$, then $D(H)=6$ and $D(G)=3$. \\

Alikhani and Soltani conjectured that a distinguishing critical graph is regular and checked this for $D(G)=1,2,3$.  Here we prove the following stronger result
  
\begin{theorem}
\label{crit}
A distinguishing critical graph has a transitive automorphism group.
\end{theorem}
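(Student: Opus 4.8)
The plan is to argue by contradiction: assume $G$ is distinguishing critical with $D(G)=r$ but $\operatorname{Aut}(G)$ is not transitive, so there are at least two orbits $O_1,\dots,O_k$ on $V(G)$, and since $k\ge 2$ each induced subgraph $G[O_i]$ is a proper induced subgraph of $G$. The goal is to produce a proper induced subgraph $H$ of $G$ with $D(H)=r$, which contradicts criticality.

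The first step is the inequality $D(G)\le\max_i D(G[O_i])$. Put $s=\max_i D(G[O_i])$, choose for each $i$ a distinguishing labeling $\phi_i\colon O_i\to\{1,\dots,s\}$ of $G[O_i]$, and glue the $\phi_i$ into a single labeling $\phi$ of $V(G)$. If $f\in\operatorname{Aut}(G)$ preserves $\phi$, then $f$ maps each orbit $O_i$ onto itself, so $f|_{O_i}$ is an automorphism of $G[O_i]$ preserving $\phi_i$ and hence is the identity; as the $O_i$ cover $V(G)$, $f=\operatorname{id}$. Thus $D(G)\le s$, so some orbit $O$ satisfies $D(G[O])\ge r$.

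The second step is the general fact that for any graph $F$ and any integer $t$ with $1\le t\le D(F)$, some induced subgraph of $F$ has distinguishing number exactly $t$, which I would prove by induction on $|V(F)|$: if $t=D(F)$ take $F$ itself, and otherwise $t\le D(F)-1$, so for any vertex $v$ the standard bound $D(F)\le D(F-v)+1$ (obtained by extending a distinguishing labeling of $F-v$ with a brand-new color on $v$) gives $t\le D(F-v)$, and by induction $F-v$---and therefore $F$---contains an induced subgraph with distinguishing number $t$. Applying this with $F=G[O]$ and $t=r$ yields an induced subgraph $H\subseteq G[O]\subsetneq G$ with $D(H)=r$, the desired contradiction; hence $\operatorname{Aut}(G)$ is transitive.

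The real content is deciding what subgraph to look at: the step I expect to require the idea is the reduction to a single orbit, namely realizing that the induced subgraph on one orbit already retains enough symmetry that its distinguishing number is at least $r$ (and that one does not need each $\phi_i$ to do any work beyond distinguishing $G[O_i]$ individually). After that, the only mild subtlety is that $D(G[O])$ may exceed $r$, which is exactly what the second step repairs; the remaining points---that an automorphism of $G$ restricts to an automorphism of each $G[O_i]$ because orbits are setwise invariant, and the trivial base case that a one-vertex graph is already transitive so ``not transitive'' genuinely forces $k\ge 2$---are routine.
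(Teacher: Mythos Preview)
Your argument is correct and follows essentially the same route as the paper. The paper isolates a single orbit $O_1$ and its complement and invokes Tymoczko's gluing lemma to get $D(G)\le\max\{D(G[O_1]),D(G\setminus O_1)\}$, then strips vertices one at a time using $D(H)\ge D(G)-1$; you decompose into \emph{all} orbits at once and package the vertex-stripping as an inductive ``every value $\le D(F)$ is realized'' lemma, but the two proofs share the same two ideas and the same logical skeleton.
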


This theorem implies the conjecture because if vertices $v_1$ and $v_2$ are in the same orbit under $\operatorname{Aut}(G)$, they must have the same degree.  The key behind the proof comes from the idea of considering the orbits of the vertices, which was done by Tymoczko in~\cite{Tymoczko}.  The point is that two elements that are in different orbits can always be given the same label, since there is no element of $\operatorname{Aut}(G)$ that will bring one to the other.  From this fact, the following lemma in~\cite{Tymoczko} is clear:

\begin{lemma}
\label{l1}
\cite{Tymoczko}
Fix an orbit $O$ under the action of a group $\Gamma$ on a set $X$. Let $\phi_1$ be a $k_1$-distinguishing
labeling of $O$ under the action of $\phi_1$ and let $\phi_2$ be a $k_2$-distinguishing labeling of $X\backslash O$ under
the action of $\phi_2$. The labeling $\phi$ defined by $\phi_O = \phi_1$ and $\phi_{X\backslash O} = \phi_2$ is a $\max\{k_1, k_2\}$-
distinguishing labeling of $X$ under the action of $\Gamma$.
\end{lemma}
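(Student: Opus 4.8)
The plan is to reformulate everything through the subgroup of $\Gamma$ that fixes a given labeling, and to observe that juxtaposing two labelings corresponds to intersecting the two subgroups. For a $\Gamma$-set $Y$ and a labeling $\psi \colon Y \to \{1,\dots,k\}$, let $\operatorname{Stab}(\psi) = \{\gamma \in \Gamma : \psi(\gamma\cdot y) = \psi(y) \text{ for all } y \in Y\}$ be the set of group elements preserving the labels. Since $\Gamma$ acts faithfully on $X$, a labeling of $X$ is distinguishing exactly when $\operatorname{Stab}(\psi) = \{e\}$. The first thing I would record is that, being an orbit, $O$ is $\Gamma$-invariant, and therefore so is $X \setminus O$; hence every $\gamma \in \Gamma$ restricts to a permutation of $O$ and to a permutation of $X\setminus O$, so asking whether $\gamma$ preserves $\phi_1$ on $O$ and whether it preserves $\phi_2$ on $X\setminus O$ makes sense independently.

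The heart of the argument is the identity $\operatorname{Stab}(\phi) = \operatorname{Stab}(\phi_1)\cap\operatorname{Stab}(\phi_2)$. Indeed, each $x \in X$ lies in exactly one of $O$ and $X\setminus O$, and its $\phi$-label equals its $\phi_1$-label or $\phi_2$-label accordingly; since $\gamma$ carries each of the two pieces into itself, the condition $\phi(\gamma\cdot x) = \phi(x)$ for all $x \in X$ splits as the conjunction of the same condition over $O$ and over $X\setminus O$. That conjunction is precisely membership in $\operatorname{Stab}(\phi_1)\cap\operatorname{Stab}(\phi_2)$.

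To conclude, I would unwind the two distinguishing hypotheses. Because the restricted action of $\Gamma$ on $O$ may fail to be faithful, saying that $\phi_1$ is $k_1$-distinguishing on $O$ means that $\operatorname{Stab}(\phi_1)$ is exactly the pointwise stabilizer $K_O = \{\gamma : \gamma x = x \text{ for all } x \in O\}$, and similarly $\operatorname{Stab}(\phi_2) = K_{X\setminus O}$. Their intersection consists of the elements fixing every point of $X$, which is $\{e\}$ by faithfulness of the action on $X$. Combined with the identity above, this gives $\operatorname{Stab}(\phi) = \{e\}$, so $\phi$ is distinguishing; and since $\phi$ takes values in $\{1,\dots,\max\{k_1,k_2\}\}$, it is a $\max\{k_1,k_2\}$-distinguishing labeling, as claimed.

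There is no deep obstacle here; the one point requiring care is the kernel bookkeeping in the final step. One must interpret ``distinguishing labeling of $O$'' relative to the possibly unfaithful restricted action — that is, as collapsing the stabilizer down to $K_O$ rather than to the identity — so that the two pointwise stabilizers $K_O$ and $K_{X\setminus O}$ intersect in exactly the (trivial) kernel of the faithful action on all of $X$. Once this is set up correctly, the remainder is a routine set-theoretic check.
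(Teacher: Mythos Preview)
Your proof is correct. The paper does not actually prove this lemma: it is quoted from \cite{Tymoczko} and declared ``clear'' from the observation that elements in different orbits can never be mapped to one another by $\Gamma$, so labels on $O$ and on $X\setminus O$ do not interfere. Your argument formalizes exactly this idea via the identity $\operatorname{Stab}(\phi)=\operatorname{Stab}(\phi_1)\cap\operatorname{Stab}(\phi_2)$, and you are right to flag the one genuine subtlety the paper elides: the restricted actions on $O$ and $X\setminus O$ need not be faithful, so ``distinguishing'' on each piece only forces the stabilizer down to the pointwise kernel $K_O$ (respectively $K_{X\setminus O}$), and one then needs faithfulness on all of $X$ to conclude $K_O\cap K_{X\setminus O}=\{e\}$.
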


We will need one more easy lemma for our proof.

\begin{lemma}
\label{l2}
Let $H$ be the induced subgraph of $G$ obtained by deleting a single vertex.  Then $D(H)\ge D(G)-1$.
\begin{proof}
Label $H$ with $D(H)$ labels and label the remaining vertex of $G$ with $D(H)+1$.  Then any automorphism of $G$ that preserves the labels will fix that last vertex because it has its own label, and will also fix the vertices of $H$ because it must induce an automorphism of $H$.  Thus this is a distinguishing labeling for $G$ and thus $D(G) \le D(H)+1$.
\end{proof}
\end{lemma}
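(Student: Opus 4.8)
The plan is to establish the equivalent inequality $D(G)\le D(H)+1$ by constructing an explicit distinguishing labeling of $G$ that uses only $D(H)+1$ colors. Write $v$ for the deleted vertex, so that $H$ is the induced subgraph of $G$ on the vertex set $V(G)\setminus\{v\}$. The whole argument is a lifting argument: start from an optimal labeling of $H$ and spend one extra color to pin down $v$.

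Concretely, I would first fix a minimal distinguishing labeling $\phi$ of $H$; by definition it uses the colors $\{1,\ldots,D(H)\}$ and is preserved by no nonidentity element of $\operatorname{Aut}(H)$. I then extend $\phi$ to a labeling $\phi'$ of all of $G$ by setting $\phi'|_{V(H)}=\phi$ and giving the deleted vertex the fresh color $\phi'(v)=D(H)+1$. This labeling uses at most $D(H)+1$ colors, so everything reduces to verifying that $\phi'$ is distinguishing for $G$.

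The verification has two steps. Suppose $f\in\operatorname{Aut}(G)$ preserves $\phi'$. Since $v$ is the unique vertex carrying the color $D(H)+1$, color preservation forces $f(v)=v$. The substantive step is then to check that any automorphism of $G$ fixing $v$ restricts to an automorphism of $H$: because $H$ is induced, adjacency among the vertices of $V(G)\setminus\{v\}$ is exactly the adjacency inherited from $G$, so $f|_{V(H)}$ is a bijection of $V(H)$ preserving that adjacency, hence lies in $\operatorname{Aut}(H)$. As $f|_{V(H)}$ also preserves $\phi=\phi'|_{V(H)}$, the distinguishing property of $\phi$ gives $f|_{V(H)}=\mathrm{id}$, and together with $f(v)=v$ we get $f=\mathrm{id}$.

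It follows that $\phi'$ distinguishes $G$ with $D(H)+1$ colors, whence $D(G)\le D(H)+1$, i.e.\ $D(H)\ge D(G)-1$. The only genuinely delicate point—the one I would be careful to state precisely—is that fixing $v$ really does yield an automorphism of $H$; this relies essentially on $H$ being the \emph{induced} subgraph, since for an arbitrary (say spanning) subgraph the adjacency structure of $H$ need not be respected by the restriction of $f$.
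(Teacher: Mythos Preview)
Your argument is correct and is essentially identical to the paper's own proof: extend an optimal distinguishing labeling of $H$ by assigning the deleted vertex a fresh color, observe that any label-preserving automorphism of $G$ must fix that vertex and hence restrict to a label-preserving automorphism of $H$, and conclude $D(G)\le D(H)+1$. Your write-up merely makes explicit (correctly) the point that $H$ being \emph{induced} is what guarantees the restriction lands in $\operatorname{Aut}(H)$.
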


\begin{proof}[Proof of Theorem~\ref{crit}]
Assume that $G$ is critical but that $\operatorname{Aut}(G)$ is not transitive.  Then there is an orbit $O_1$ of the vertices that does not include all vertices of $G$.  By Lemma~\ref{l1}, $D(G)\le \max \{D(O_1), D(G\backslash O_1)\}$.  This implies that, taking $H$ to be either $O_1$ or $G\backslash O_1$ (whichever has larger distinguishing number), $D(G)\le D(H)$ for some induced subgraph $H$ of $G$.  Then by Lemma~\ref{l2}, we may remove vertices one at a time from $H$ until the distinguishing number of the resulting graph equals that of $G$.  Thus $G$ can only have one orbit if it is distinguishing critical, as desired.
\end{proof}

\section{Distinguishing numbers for \texorpdfstring{$S_n$}{S_n}}
\label{sn}
\subsection{Overview of methods}
Let $\Gamma$ be a finite group acting on a set $X$ with $n$ elements.  In~\cite{Tymoczko}, Tymoczko gives an algorithm to construct a distinguishing label that takes at most $k$ labels if $|\Gamma| \le k!$.  We reproduce the algorithm here.  In this algorithm and for the remainder of the paper, $\operatorname{Stab}(X_1)$ refers to the pointwise stabilizer of a set given an implied group action.

\begin{algorithm}
\label{alg}
\cite{Tymoczko}
\begin{enumerate}
\item Initialize $i=1$ and set $\phi(x)=1$ for all $x$ in $X$.  Let $\Gamma_1=\Gamma$ and $X_1=X$.  
\item While $\Gamma_i\neq \operatorname{Stab}(X_i)$ do 
\begin{itemize}
\item [(a)] Choose a subset $X'_{i+1}$ of $X_i$ that contains a unique element from each nontrivial $\Gamma_i$-orbit in $X_i$, namely so that the intersection $|X'_{i+1}\cap \Gamma_ix| = 1$ for each $x$ in
$X_i$ such that $\Gamma_ix$ has at least two elements.
\item [(b)] Label the elements of $X'_{i+1}$ with $i+1$, so $\phi(x)=i+1$ for each $x$ in $X'_{i+1}$.
\item [(c)] Let $X_{i+1}=X_i\backslash X'_{i+1}$ and let $\Gamma_{i+1}=\operatorname{Stab}_{\Gamma_i}(X'_{i+1})$.
\item [(d)] Increment $i$ by 1. 
\end{itemize}
\end{enumerate}
\end{algorithm}

The key idea behind this algorithm is the fact that if $\Gamma$ acts transitively on some set of size $m$, then by the orbit-stabilizer theorem, the size of the subgroup that fixes one of those elements is $\frac{|\Gamma|}{m}$.  Based on this principle, Tymoczko proved the following theorem.

\begin{theorem}
\cite{Tymoczko} 
If $|\Gamma|\le k!$, then $D_{\Gamma}(X) \le k$.
\end{theorem}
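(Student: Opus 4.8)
The plan is to analyze Tymoczko's Algorithm~\ref{alg} and show that, under the hypothesis $|\Gamma|\le k!$, it always outputs a distinguishing labeling using at most $k$ labels. Two things need checking: that the algorithm is correct, and that it uses few enough labels. The correctness I regard as routine and would dispatch quickly. Each pass of the loop replaces $\Gamma_i$ by a \emph{proper} subgroup $\Gamma_{i+1}=\operatorname{Stab}_{\Gamma_i}(X'_{i+1})$, since $\Gamma_i$ acts transitively on each nontrivial orbit $O$ (of size $\ge 2$), so by the orbit-stabilizer theorem the stabilizer of the representative chosen from $O$ is proper in $\Gamma_i$ and contains $\Gamma_{i+1}$. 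Thus the chain $\Gamma_1\supsetneq\Gamma_2\supsetneq\cdots$ stabilizes after finitely many passes, necessarily at a stage $i$ with $\Gamma_i=\operatorname{Stab}(X_i)$; as $\Gamma_i$ also fixes $X'_2,\dots,X'_i$ pointwise by construction, it fixes all of $X$ pointwise, so $\Gamma_i=\{e\}$ by faithfulness. Finally, if $g\in\Gamma$ preserves the output $\phi$, then $g\in\Gamma_j$ for all $j$ by induction: given $g\in\Gamma_j$, the map $g$ permutes $X'_{j+1}=\phi^{-1}(j+1)$ and sends each orbit representative into its own $\Gamma_j$-orbit, which meets $X'_{j+1}$ only in that representative, so $g$ fixes $X'_{j+1}$ pointwise and lies in $\Gamma_{j+1}$; hence $g=e$ and $\phi$ is distinguishing. (Alternatively one could reduce to a single orbit via Lemma~\ref{l1} and induct, but I will keep the algorithmic proof.)

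For the count, I would prove by induction on $i$ that every group $\Gamma_i$ formed by the algorithm satisfies $|\Gamma_i|\le (k-i+1)!$, the base case $i=1$ being the hypothesis. Granting this, $\Gamma_k$ --- if the algorithm ever forms it --- has order at most $1!=1$, so $\Gamma_k=\operatorname{Stab}(X_k)$ and the loop halts without running its body at $i=k$; therefore the only labels introduced are $2,3,\dots,k$, and $D_\Gamma(X)\le k$. Everything reduces to the inductive step, namely the following local claim: if a finite group $\Delta$ acts faithfully on a set $Y$ with $|\Delta|\le j!$ and with at least one nontrivial orbit, and $\Delta'$ is the pointwise stabilizer of a set $T=\{x_1,\dots,x_t\}$ consisting of one point $x_\ell$ from each nontrivial orbit $O_\ell$, then $|\Delta'|\le (j-1)!$.

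The local claim is the main obstacle, and the crux is that the obvious bound $|\Delta'|\le\prod_\ell(|O_\ell|-1)!$ is too weak when there are several large orbits; one must use $|\Delta|\le j!$ more cleverly. The key observation is that $\Delta'$ is exactly the $\Delta$-stabilizer of the tuple $\xi=(x_1,\dots,x_t)$ under the product action of $\Delta$ on $O_1\times\cdots\times O_t$, so $[\Delta:\Delta']=d$ where $d$ is the size of the $\Delta$-orbit $Z=\Delta\cdot\xi$. Crucially, $\Delta$ acts \emph{faithfully} on $Z$: each coordinate projection of $Z$ is onto the corresponding $O_\ell$, so the coordinates occurring in tuples of $Z$ exhaust all the nontrivial orbits, whence an element fixing $Z$ pointwise fixes $Y$ pointwise and is trivial. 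Therefore $|\Delta|\le d!$ as well, and $|\Delta'|=|\Delta|/d\le\min(j!,d!)/d$; a one-line case split (on whether $d\ge j$ or $d\le j-1$) shows this is at most $(j-1)!$. Feeding this back into the induction with $j=k-i+1$ and $\Delta=\Gamma_i$ completes the proof. I would also record the sanity check $\Gamma=S_n$ acting on $n$ points with $k=n$, where the algorithm peels off one point per pass and uses exactly $n$ labels, matching $D_{S_n}(X)=n$.
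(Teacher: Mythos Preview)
Your proposal is correct and follows essentially the same approach the paper points to: run Algorithm~\ref{alg} and track how $|\Gamma_i|$ shrinks via orbit--stabilizer, establishing inductively that $|\Gamma_i|\le (k-i+1)!$ so that the loop halts by step $k$. The paper's own proof is just that one sentence, deferring the details to Tymoczko; you have simply filled them in. Your device for the inductive step --- observing that $\Delta$ acts faithfully on the orbit $Z$ of the representative tuple, whence $|\Delta|\le d!$ and $|\Delta'|\le \min(j!,d!)/d\le (j-1)!$ --- is a clean way to handle the multi-orbit situation, and the case split on $d\gtrless j$ is sound (note $d\ge 2$ since some orbit is nontrivial, and $j\ge 2$ since $|\Delta|\ge 2$).
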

This is proven by simply applying Algorithm~\ref{alg} and noting how the size of the automorphism group that preserves the labels decreases.

The following lemma will prove to be useful.
\begin{lemma}
\label{stab1}
The pointwise stabilizer of an orbit is a normal subgroup.
\begin{proof}
Note that the pointwise stabilizer of an orbit is the kernel of the homomorphism induced by the action sending the group to the automorphism group of the orbit.  The result follows from the fact that the kernel of a homomorphism is always a normal subgroup.
\end{proof}
\end{lemma}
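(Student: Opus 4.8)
The plan is to exhibit the pointwise stabilizer of an orbit as the kernel of a natural homomorphism, after which normality is automatic. Fix a group $\Gamma$ acting on a set $X$, let $O\subseteq X$ be a $\Gamma$-orbit, and define $K=\operatorname{Stab}(O)=\{g\in\Gamma:\ g\cdot x=x\text{ for all }x\in O\}$. First I would record the one structural fact that drives the argument: because $O$ is an orbit, it is $\Gamma$-invariant, i.e.\ $g\cdot x\in O$ for every $g\in\Gamma$ and $x\in O$ — indeed $g\cdot x$ lies in the same orbit as $x$, which is $O$. Hence restricting the action to $O$ gives a well-defined action of $\Gamma$ on $O$ and therefore a group homomorphism $\rho\colon\Gamma\to\operatorname{Sym}(O)$.

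Next I would identify $\ker\rho$ with $K$: by definition $g\in\ker\rho$ iff $g$ induces the identity permutation of $O$ iff $g\cdot x=x$ for all $x\in O$ iff $g\in\operatorname{Stab}(O)$. Since the kernel of any group homomorphism is a normal subgroup, $K\trianglelefteq\Gamma$, which is exactly the claim. As an alternative (and a sanity check) I would verify normality directly in one line: for $g\in K$, $h\in\Gamma$, and $x\in O$, invariance of $O$ gives $h^{-1}\cdot x\in O$, so $g$ fixes it, and hence $(hgh^{-1})\cdot x=h\cdot(g\cdot(h^{-1}\cdot x))=h\cdot(h^{-1}\cdot x)=x$; thus $hgh^{-1}\in K$.

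There is essentially no hard step here; the only point requiring care is the invariance observation above — the set in question must be carried into itself by the whole group, which holds precisely because it is an orbit and would fail for an arbitrary subset of $X$ (this is why the hypothesis cannot be weakened). It is worth noting for later use that $\rho$ also realizes $\Gamma/K$ as a subgroup of $\operatorname{Sym}(O)$, namely as the permutation group that $\Gamma$ actually induces on the orbit; this quotient description is the form in which the lemma gets applied when one tracks how the acting group shrinks as labels are assigned orbit by orbit.
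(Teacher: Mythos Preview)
Your proof is correct and follows essentially the same approach as the paper: both identify the pointwise stabilizer of the orbit as the kernel of the homomorphism $\Gamma\to\operatorname{Sym}(O)$ induced by restricting the action, and then invoke the fact that kernels are normal. Your write-up simply spells out the invariance of the orbit and adds a direct conjugation check as a sanity verification, but the core argument is identical.
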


\subsection{Distinguishing number \texorpdfstring{$n-1$}{n-1}}
Tymoczko asked if there exist faithful actions of $S_n$ on a set $X$ with $D_{S_n}(X)=n-1$ for arbitrarily large $n$.  Here we answer this question in the affirmative.

\begin{theorem}
\label{con}
For $n\ge 3$, let $|X|=n+2$ with $X = \{x_1, x_2, \ldots, x_{n+2}\}$.  Consider the following action of $S_n$ on $X$: considered as a permutation of $[n]$, an element of $S_n$ acts on $\{x_1, x_2, \ldots, x_n\}$ in the natural way, and permutes $x_{n+1}$ and $x_{n+2}$ if and only if it is an odd permutation.  We claim that the distinguishing number for this action is $n-1$.  
\end{theorem}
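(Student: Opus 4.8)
The plan is to prove the two inequalities $D_{S_n}(X)\le n-1$ and $D_{S_n}(X)\ge n-1$ separately, both directions being driven by a single observation: the pair $\{x_{n+1},x_{n+2}\}$ acts as a \emph{parity detector}. An element $\sigma\in S_n$ either fixes both of these points (when $\sigma$ is even) or swaps them (when $\sigma$ is odd), and it never sends them into $\{x_1,\dots,x_n\}$. Hence if a labeling $\phi$ gives $x_{n+1}$ and $x_{n+2}$ distinct labels, every label-preserving permutation is forced to be even, and the problem reduces to distinguishing the natural action of $A_n$ on $n$ points; if instead $\phi(x_{n+1})=\phi(x_{n+2})$, the detector imposes no constraint and one is back to distinguishing the natural $S_n$-action on $n$ points, which is the $K_n$ situation needing $n$ labels. (Faithfulness is immediate for $n\ge 3$ since the natural action of $S_n$ on $[n]$ is faithful.)

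For the upper bound I would exhibit the explicit labeling $\phi\colon X\to\{1,\dots,n-1\}$ given by $\phi(x_i)=i$ for $1\le i\le n-2$, $\phi(x_{n-1})=\phi(x_n)=n-1$, $\phi(x_{n+1})=1$, and $\phi(x_{n+2})=2$; note $n\ge 3$ is exactly what makes labels $1$ and $2$ distinct and guarantees $x_1$ exists. If $\sigma\in S_n$ preserves $\phi$, then since $\phi(x_{n+1})\ne\phi(x_{n+2})$ and $\sigma$ cannot carry these two points anywhere else, $\sigma$ fixes $x_{n+1}$ and $x_{n+2}$, so $\sigma$ is even. Restricting to $\{x_1,\dots,x_n\}$, the labels $1,\dots,n-2$ each occur exactly once there, so $\sigma$ fixes $x_1,\dots,x_{n-2}$; the only elements of $S_n$ fixing $[n-2]$ pointwise are the identity and the transposition $(n-1\ \,n)$, and the latter is odd, so $\sigma=\mathrm{id}$. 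This gives $D_{S_n}(X)\le n-1$.

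For the lower bound, let $\phi\colon X\to\{1,\dots,n-2\}$ be arbitrary; I would produce a non-identity label-preserving element by cases. If $\phi(x_{n+1})=\phi(x_{n+2})$, then by pigeonhole two of $x_1,\dots,x_n$ share a label, say $x_i$ and $x_j$, and the transposition $(i\ j)$ preserves $\phi$: it swaps the equally labeled $x_i,x_j$ and (being odd) swaps the equally labeled $x_{n+1},x_{n+2}$. If $\phi(x_{n+1})\ne\phi(x_{n+2})$, then any label-preserving permutation must be even, so it suffices to find a non-identity even permutation preserving the labels on $\{x_1,\dots,x_n\}$; with $n$ points and at most $n-2$ labels, either some label is used on three of them, giving a label-preserving $3$-cycle, or no label is used thrice, in which case an easy count ($2a+b=n$, $a+b\le n-2$ forces $a\ge2$) shows at least two labels are each used on exactly two points, giving a label-preserving product of two disjoint transpositions. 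In every case a non-identity element of $S_n$ preserves $\phi$, so $D_{S_n}(X)>n-2$, and combined with the upper bound $D_{S_n}(X)=n-1$.

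The main obstacle, such as it is, is the even-permutation subcase of the lower bound: one has to be sure the parity detector really forces evenness and then check that the natural action of $A_n$ on $n$ points genuinely cannot be distinguished with $n-2$ labels, which is the little combinatorial dichotomy above ($3$-cycle versus double transposition). Everything else — the pigeonhole step, the verification of the explicit labeling, faithfulness — is routine bookkeeping.
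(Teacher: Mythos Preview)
Your argument is correct and follows essentially the same approach as the paper: the explicit $(n-1)$-labeling is the same construction up to a relabeling of colors, and the lower bound rests on the identical dichotomy that with $n-2$ labels on $\{x_1,\dots,x_n\}$ one finds either a $3$-cycle or a product of two disjoint transpositions (i.e.\ a non-identity even permutation) preserving the labels. Your additional subcase $\phi(x_{n+1})=\phi(x_{n+2})$ is handled by a single transposition, which is fine but actually unnecessary, since the even permutation produced in the other subcase fixes $x_{n+1}$ and $x_{n+2}$ regardless of their labels---this is how the paper treats both cases at once.
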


\begin{proof}

First we demonstrate an $n-1$ distinguishing labeling.  Label two elements of $\{x_1, x_2, \ldots, x_n\}$ with $1$ and the rest with $2, 3, \ldots, n-1$.  Label $x_{n+1}$ with 1 and $x_{n+2}$ with 2.  Then if a non-identity element of $S_n$ preserves the labels, it must be the permutation $(12)$.  But this is an odd permutation and thus switches $x_{n+1}$ and $x_{n+2}$, which is not allowed.

Now assume there is an $n-2$-distinguishing labeling of $X$.  Then either $3$ elements of $\{x_1, x_2, \ldots, x_n\}$ are given the same label, or there are two distinct pairs which are given the same label.  In either case, there is a non-identity even permutation that preserves both $\{x_1, x_2, \ldots, x_n\}$ and $\{x_{n+1}, x_{n+2}\}$, which is a contradiction.
\end{proof}

\subsection{Further analysis}
Here we analyze all possible faithful actions of $S_n$ with distinguishing number $n-1$.  We begin with a few definitions and lemmas.

Consider a slight variant of Algorithm~\ref{alg} in the following way.  Namely, we finish labeling one pass of an orbit completely before moving onto another.  So in step $1$, we label every element of $O$ with $1$ and in step $2$ we choose one of them to label $2$.  Then in step 3 we only look at one of the orbits that takes $n-2$ labels (including the label 1) to distinguish, label one of them with 3, and then only look at the new orbits within that previous orbit that take $n-3$ labels.  After we completely finish one orbit we can go back and do the rest in similar fashion.  Compared to Algorithm~\ref{alg}, this is analogous to a depth-first-search rather than a breadth-first search.  The reason we do it this way is so that we have better control over the factors by which the subgroup that preserves the labels decreases by at each step. \\

Our methods will rely on the following fact, which follows from the orbit-stabilizer theorem:

\begin{fact}
\label{os}
Let $\Gamma$ act faithfully on a set $X$.  Take some $x\in X$ and let $\Gamma_x$ be the subgroup of $\Gamma$ that preserves $x$.  Then $\frac{|\Gamma|}{|\Gamma_x|} = |\operatorname{Orb}_{\Gamma}(x)|$.
\end{fact}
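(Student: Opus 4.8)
The plan is to prove this directly from the orbit--stabilizer theorem, which is exactly what it is; the faithfulness hypothesis plays no role here (it is inherited from the ambient setup and is needed elsewhere), so I would not invoke it. The approach is to exhibit an explicit bijection between the set of left cosets $\Gamma/\Gamma_x$ and the orbit $\operatorname{Orb}_\Gamma(x)$, and then apply Lagrange's theorem to count.

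First I would define the map $\psi \colon \Gamma/\Gamma_x \to \operatorname{Orb}_\Gamma(x)$ by $\psi(g\Gamma_x) = g\cdot x$. The first step is to check this is well defined: if $g\Gamma_x = h\Gamma_x$, then $h^{-1}g \in \Gamma_x$, so $(h^{-1}g)\cdot x = x$, and applying $h$ gives $g\cdot x = h\cdot x$. The second step is injectivity, which is the same computation read backwards: if $g\cdot x = h\cdot x$, then $(h^{-1}g)\cdot x = x$, so $h^{-1}g \in \Gamma_x$ and hence $g\Gamma_x = h\Gamma_x$. The third step is surjectivity, which is immediate from the definition of the orbit: every element of $\operatorname{Orb}_\Gamma(x)$ has the form $g\cdot x$ for some $g \in \Gamma$, and that element is the image of $g\Gamma_x$.

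Having established that $\psi$ is a bijection, the final step is to combine it with the counting formula for cosets: since $\Gamma$ is finite, Lagrange's theorem gives $|\Gamma/\Gamma_x| = |\Gamma|/|\Gamma_x|$, and therefore
\[
\frac{|\Gamma|}{|\Gamma_x|} = |\Gamma/\Gamma_x| = |\operatorname{Orb}_\Gamma(x)|,
\]
as claimed. There is really no main obstacle here — the statement is a textbook fact and the only thing to be careful about is making sure the coset-to-orbit correspondence is stated for left cosets consistently with the convention $g \cdot (h \cdot x) = (gh)\cdot x$ used for the action, so that the well-definedness and injectivity arguments line up. If the paper prefers, the whole thing can simply be cited as the orbit--stabilizer theorem rather than reproved.
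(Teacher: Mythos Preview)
Your proof is correct and is the standard textbook argument; the paper itself does not give a proof at all but simply states this as the orbit--stabilizer theorem, exactly as you suggest doing in your final sentence. Your observation that faithfulness is irrelevant here is also accurate.
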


Fact~\ref{os} tells us that if we decide to label an element of a set with a new color, the factor by which the number of group elements that preserves the labels of the set decreases is equal to the size of the orbit of that element before it was assigned a new label. 

The following definition will prove to be useful.
\begin{definition}
[pseudoclique]
A \textit{pseudoclique} is a subset $S\subseteq X$ such that an element of $S_n/\operatorname{Stab}(X)$ permutes any pair of elements in $S$ while stabilizing every other element of $X$.  
\end{definition}

\begin{lemma}
\label{l4}
If by an application of the stated variant of Algorithm~\ref{alg}, the orbit sizes are $k, k-1, \ldots, 2, 1$, then those $k$ elements of the first orbit form a pseudoclique.
\end{lemma}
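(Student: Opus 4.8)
We run the depth-first variant of Algorithm~\ref{alg} on an orbit $O$ of $S_n$ (strictly speaking, on the image of $S_n$ in the symmetric group of $X$, so really on $S_n/\operatorname{Stab}(X)$; by faithfulness $\operatorname{Stab}(X)=1$, so we may as well work with $S_n$ itself). The hypothesis is that, on this orbit, the successive orbit sizes encountered are $k, k-1, \dots, 2, 1$. Concretely: the first orbit $O=O_0$ has size $k$; we recolor one element $y_1$, and the stabilizer $\Gamma_1 = \operatorname{Stab}_{S_n}(y_1)$ then has an orbit $O_1 \subseteq O\setminus\{y_1\}$ of size $k-1$; we recolor $y_2 \in O_1$, and $\Gamma_2 = \operatorname{Stab}_{\Gamma_1}(y_2)$ has an orbit $O_2$ of size $k-2$; and so on, until after $k-1$ recolorings the stabilizer $\Gamma_{k-1}$ fixes all of $O$ pointwise. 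We must show the $k$ elements $\{y_1, \dots, y_{k-1}, y_k\}$ of $O$ (where $y_k$ is the final, automatically-fixed element) form a pseudoclique, i.e. for any pair $y_i, y_j$ there is an element of $S_n$ transposing $y_i$ and $y_j$ while fixing every other element of $X$.

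**The plan.** The key numerical input is Fact~\ref{os}: each recoloring step shrinks the acting group by exactly the current orbit size, so $|\Gamma_{i}| = |\Gamma_{i-1}|/(k-i+1)$, giving $|\Gamma_{k-1}| = |S_n| / (k! / 1) \cdot$ (adjusting indices) $= |S_n|/k!$ — more precisely $|\Gamma_0 : \Gamma_{k-1}| = k(k-1)\cdots 2 = k!$. I would set $N := \Gamma_{k-1}$, the pointwise stabilizer of the orbit $O$ (a normal subgroup of $S_n$ by Lemma~\ref{stab1}), and consider the subgroup $\Gamma_0 = S_n$ acting on $O$; the image of $S_n$ in $\operatorname{Sym}(O) \cong S_k$ has order $|S_n|/|N| = k!$, hence the image is \emph{all} of $\operatorname{Sym}(O)$. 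In particular, for any pair $y_i, y_j \in O$ there is $\sigma \in S_n$ whose action on $O$ is precisely the transposition $(y_i\, y_j)$. This $\sigma$ already transposes $y_i, y_j$ and fixes the other elements of $O$; what remains is to show we can choose $\sigma$ so that it \emph{also} fixes every element of $X \setminus O$, i.e. that there is such a $\sigma$ in $N' := \operatorname{Stab}(X\setminus O)$ — or rather that the coset of transpositions meets the pointwise stabilizer of the complement.

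**Where the complement comes in.** Here I would use that $X$ decomposes into $S_n$-orbits, that the pointwise stabilizer of any orbit is normal (Lemma~\ref{stab1}), and that $S_n$ for $n\ge 5$ has an extremely restricted normal subgroup lattice ($1, A_n, S_n$). The pointwise stabilizer $N$ of $O$ is normal with $[S_n:N] = k! $; this already forces $k! \in \{1, 2, |S_n|\}$ unless $N = A_n$ is impossible-by-index or $N=1$. For the generic case $N = 1$ we get $k! = n!$, so $k=n$ and $O$ is a regular-type orbit on which $S_n$ acts as all of $S_n$; then a transposition in $S_n$ acting as $(y_i\,y_j)$ on $O$ is literally the transposition of those two symbols, and we must check it fixes the rest of $X$ — which follows because each other orbit is an $S_n$-set and we need the transposition acting trivially there. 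This last point is the genuine obstacle: a transposition in $S_n$ need not act trivially on an arbitrary $S_n$-orbit. I expect the resolution to come from the structure forced by having \emph{reached} orbit sizes exactly $k, k-1, \dots, 1$ — this says the stabilizer chain behaves like the standard one for $S_k$ acting on $k$ points, and I would argue that if some transposition failed to fix $X\setminus O$, then at some stage an orbit in the complement would split and the recorded orbit-size sequence would have to include an extra factor, contradicting the hypothesis $k, k-1, \dots, 1$. Making that contradiction precise — tracking how the complement's orbits refine along the stabilizer chain and showing a "bad" transposition forces a size-sequence violation — is the step I'd expect to need the most care, likely handled by a careful application of Fact~\ref{os} to the complement in parallel with $O$, together with the normal-subgroup structure of $S_n$ to pin down exactly which subgroup $N$ is.
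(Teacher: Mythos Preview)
Your index computation is the heart of the matter and is correct: the chain $\Gamma_0 \supset \Gamma_1 \supset \cdots \supset \Gamma_{k-1}$ has successive indices $k, k-1, \ldots, 2$, so the image of $S_n$ in $\operatorname{Sym}(O)$ has order $k!$ and is therefore all of $\operatorname{Sym}(O)$. That is the substantive content of the lemma, and your argument for it is cleaner than the paper's, which proceeds by induction on $k$ together with a ``swap the order of stabilization'' trick to fold the new element $x_{i+1}$ into the pseudoclique already formed by $x_1,\dots,x_i$.

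Where you stall is on the ``complement'' clause of the pseudoclique definition---requiring the swapping element to fix every other point of $X$, not just of $O$. This is a red herring. The paper's own inductive proof never addresses the complement either: its final sentence is ``we can get any permutation of $x_1,\ldots,x_{i+1}$,'' with no claim about $X\setminus O$, and the application inside Theorem~\ref{nset} only uses the full-$S_k$-on-$O$ conclusion. Indeed, under your literal reading the lemma is \emph{false}: in the action of Theorem~\ref{con}, the orbit $\{x_1,\ldots,x_n\}$ has stabilizer-chain sizes $n,n-1,\ldots,1$, yet the transposition $(x_i\,x_j)$ necessarily swaps $x_{n+1}$ and $x_{n+2}$, so no element of $S_n$ realizes that transposition while fixing the rest of $X$. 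So the normal-subgroup analysis you sketch for the complement cannot be completed, and should simply be dropped; what survives is your one-line index argument, which already proves everything the paper proves and uses.
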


\begin{proof}
We use induction on $k$.  The base cases are easy.  Now assume it holds for some $k=1 ,2, \ldots, i$.  Then for $k=i+1$, say the algorithm is applied in the order $x_{i+1}, x_i, \ldots, x_1$.  Then by the inductive hypothesis, $x_1, \ldots, x_i$ form a pseudoclique.  Next, consider stabilizing $x_i$ first and then $x_{i+1}$ rather than the other way around.  Since $x_1, \ldots x_{i-1}$ form a pseudoclique, if $x_{i+1}$ does not form a pseudoclique with $x_1, \ldots x_{i-1}$, then $\operatorname{Stab}(x_{i+1})\subset \operatorname{Stab}(x_i)$.  But then looking at the factors by which the group decreases by, we see that we must have $\operatorname{Stab}(x_{i+1})=\operatorname{Stab}(x_i)$, which is a contradiction.  Since we can obtain any permutation of $x_1, \ldots x_i$ that leaves $x_{i+1} $ constant and any permutation of $x_1, \ldots, x_{i-1}, x_{i+1}$ that leaves $x_i$ constant, we can get any permutation of $x_1, \ldots, x_{i+1}$ as desired.
\end{proof}

\begin{theorem}
\label{nset}
If $S_n$ acts faithfully on a set $X$ with distinguishing number $n-1$, then there must be a subset of $X$ of size $n$ for which $S_n$ acts via all possible permutations.
\end{theorem}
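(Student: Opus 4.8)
The plan is to peel $X$ down to a single transitive orbit, analyse that orbit with the depth-first variant of Algorithm~\ref{alg}, and read off a chain of orbit sizes to which Lemma~\ref{l4} applies. First I would split $X$ into $S_n$-orbits. By Lemma~\ref{stab1} the pointwise stabiliser of each orbit is normal in $S_n$, hence is $\{e\}$, $A_n$, or $S_n$; if each of them contained $A_n$, then $A_n$ would act trivially on $X$, contradicting faithfulness. So some orbit $O$ carries a faithful action of $S_n$. Extending an optimal distinguishing labelling of $O$ by the constant label $1$ on $X\setminus O$ distinguishes $X$ — every $g\in S_n$ fixes $X\setminus O$ setwise, so such an extension is preserved only when its restriction to $O$ is, i.e.\ only by $e$ — whence $D_{S_n}(X)\le D_{S_n}(O)$ and therefore $D_{S_n}(O)\ge n-1$. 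On the other hand Tymoczko's bound ($|S_n|=n!\le n!$) gives $D_{S_n}(O)\le n$, so $D_{S_n}(O)\in\{n-1,n\}$. It now suffices to prove that a faithful transitive $S_n$-action of distinguishing number $\ge n-1$ has degree exactly $n$: such an action has a core-free point stabiliser of index $n$, so the homomorphism $S_n\to\operatorname{Sym}(O)$ is injective between groups of the same order, hence an isomorphism, and $O$ is the desired subset on which $S_n$ acts via all permutations.

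To pin down the degree, run the depth-first variant of Algorithm~\ref{alg} on $(S_n,O)$, descending at each stage into the sub-orbit needing the most labels. By Fact~\ref{os} each new label applied to a point divides the surviving subgroup by the current orbit size of that point, and the depth-first rule keeps the descent inside the chosen nested orbit; so the sizes $s_1>s_2>\dots>s_t$ met along the first (deepest) descent are strictly decreasing integers $\ge 2$ with $s_1=|O|$, the number of labels used equals $t+1$, and $\prod_i s_i=[S_n:G_t]$ divides $n!$. As the labelling is distinguishing, $t+1\ge D_{S_n}(O)\ge n-1$, and Tymoczko's bound gives $t+1\le n$; so $t\in\{n-2,n-1\}$. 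If $t=n-1$, strict monotonicity forces $s_i\ge n+1-i$, hence $\prod_i s_i\ge n!$, hence $\prod_i s_i=n!$ and $s_i=n+1-i$ for every $i$; the chain is $n,n-1,\dots,2$, $G_{n-1}=\{e\}$, $|O|=n$, and Lemma~\ref{l4} (reading the remaining fixed point as the orbit of size $1$) exhibits $x_1,\dots,x_n$ as a pseudoclique, confirming that $S_n$ acts on $O$ via all permutations. In particular this settles the case $D_{S_n}(O)=n$.

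There remains the case $D_{S_n}(O)=n-1$, which forces $t=n-2$ — for if $t=n-1$ the previous step would give $|O|=n$ and hence the standard action, of distinguishing number $n\ne n-1$. Now $\prod_i s_i\ge(n-1)!$ and $\prod_i s_i\mid n!$, so $\prod_i s_i=n!/d$ for some $d\le n$; the value $d=n$, i.e.\ $s_1=|O|=n-1$, is impossible since the minimal degree of a faithful transitive $S_n$-action is $n$. Ruling out the remaining values of $d$ — all forcing $s_1=|O|>n$ — is the main obstacle, since the size inequalities alone do not forbid a length-$(n-2)$ descent that starts from a large orbit. The route I would take is the recursion built into the descent: each $s_{i+1}$ is an orbit length of a point stabiliser inside the previous sub-orbit, and that sub-orbit is again the hardest, so its distinguishing number is $n-1-i$ and Tymoczko's bound forces $|G_i|>(n-2-i)!$; fed against the classification of small-index subgroups of $S_n$, this should show that for $n$ large every faithful transitive $S_n$-action of degree $>n$ has distinguishing number strictly below $n-1$, completing the proof. (For a few small $n$ there do exist transitive degree-$>n$ actions with distinguishing number $n-1$ — for instance $S_4$ on the six $2$-subsets of $[4]$ — so this last step is genuinely an asymptotic statement and the small cases must be inspected separately.)
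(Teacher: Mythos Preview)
Your reduction to a single orbit is cleaner than the paper's: using Lemma~\ref{stab1} together with the normal-subgroup structure of $S_n$ to locate an orbit $O$ on which $S_n$ acts faithfully, and then bounding $D_{S_n}(O)$ from both sides, is a nice way to set things up. Your treatment of the case $t=n-1$ is also correct and matches the paper's ``takes $n$ steps'' case, where Lemma~\ref{l4} immediately produces the size-$n$ pseudoclique.

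The genuine gap is the case $t=n-2$, which you yourself flag as ``the main obstacle'' and then leave as a programme rather than a proof. Your proposed route---feed the recursion $|G_i|>(n-2-i)!$ against the classification of small-index subgroups of $S_n$---is plausible in spirit but is not carried out, and as you note it would in any case only yield an asymptotic statement requiring separate treatment of small $n$. This case is precisely where the substance of the paper's argument lies. The paper does \emph{not} appeal to the subgroup classification; instead it stays entirely inside the combinatorics of the descent sequence $(a_i)$. The key device is Lemma~\ref{l3}, which shows that in the relevant range a drop of exactly $2$ between consecutive $a_j$ forces a second drop of at least $2$ and hence pushes $\prod a_i$ above $n!$. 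Armed with this, the paper runs a three-way case split (on whether $2$ appears in the sequence, and if not whether the sequence is exactly $n,n-1,\ldots,3$), each time either producing a pseudoclique of size $n$ via Lemma~\ref{l4} or deriving $\prod a_i>n!$, or else landing on a normal subgroup of $S_n$ of forbidden order. None of this is captured by the outline you give.

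A smaller point: your sentence ``It now suffices to prove that a faithful transitive $S_n$-action of distinguishing number $\ge n-1$ has degree exactly $n$'' overstates what is needed and, as your own $S_4$-on-$2$-subsets example shows, is literally false for small $n$. The paper's argument is likewise asymptotic (the bounds in Lemma~\ref{l3} and in Case~1 involve $\sqrt{n}$ and $2^{\sqrt{n}}$), so this is not a discrepancy in scope, but you should state the sufficient condition more carefully.
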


\begin{proof}
Throughout this proof, when we say $S_n$, we will always implicitly refer to the implied action of $S_n$ on the set $X$.  

If $D_{S_n}(X)=n-1$, then that there must be an orbit $O$ that takes at least $n-1$ labels to distinguish under the action of $S_n/\operatorname{Stab}(O)$.  

We claim that all possible applications of the stated variant of Algorithm~\ref{alg} will take precisely $n-1$ labels to finish one pass.   It is not possible for it to take more than $n$ elements.  Otherwise, applying the algorithm to this orbit would take more than $n$ steps, which would imply that $|S_n/\operatorname{Stab}(O)|$ is at least the product of $n+1$ distinct positive integers, but $|S_n|=n!$.  If it takes $n$ steps, then by Lemma~\ref{l4} we end up with $S_n$ acting naturally on an $n$-element subset of $X$, and we have already seen a construction for this in Theorem~\ref{con}.  Thus we can assume $O$ takes $n-1$ labels to distinguish.   Now look at the sequence given by the size of the orbit after each step.  This is a sequence of $n-1$ strictly decreasing integers $a_1, a_2, \ldots, a_{n-1}$ with $a_{n-1}=1$ and with product dividing $n!$.  Take the application of this algorithm that will give the largest number of consecutively increasing numbers beginning from 1. \\ 

We will now discover properties of this sequence $\{a_i\}_{i=1}^{n-1}$, beginning with the following lemma.

\begin{lemma}
\label{l3}
If for some $j$ between $1$ and $n-2$ exclusive, $4 < a_j < \sqrt{n}$, then we cannot have $a_j-a_{j+1}=2$.
\begin{proof}
Assume otherwise.  Call the elements of the orbit associated with the sequence $(a_i)$ $x_1, x_2,\ldots x_k$ before step $j$.  Then say we stabilize some element, say $x_1$ on step $j$, taking the orbit size from $a_j$ to $a_{j+1}=a_j-2$.  This means that another element, say $x_2$, is also stabilized along with $x_1$, and the remaining elements form an orbit.  
Now we claim that the stabilizer of $x_1$ is the same as the stabilizer of $x_2$.  To see this, note that stabilizing both $x_1$ and $x_2$ decreases the group size by a factor of $a_j$.  So if we stabilize $x_2$ first, the resulting orbit of $x_1$ must have size 1 in order to preserve this property.  Thus $\operatorname{Stab}(x_1)\subseteq\operatorname{Stab}(x_2)\subseteq\operatorname{Stab}(x_1)$, form which the desired result follows.

Now if we stabilize $x_3$ in our next step, the size of the resulting subgroup will be $\frac{1}{a_j(a_j-2)}$ of the original.  Note that if we stabilize $x_3$ and then $x_1$ we get the same subgroup.  This implies that after stabilizing $x_3$, $x_1$ must be in an orbit of size $k-2$.  That means some other element must be stabilized by $x_3$; it can't be $x_2$, or else the stabilizer of $x_1$ would also be the same as the stabilizer of $x_3$.  Thus the stabilizer of $x_3$ is the stabilizer of a different element, say $x_4$.  Then in our original sequence, if we stabilize $x_3$ after stabilizing $x_1$, the values of the $(a_i)$ will drop by at least 2 twice in a row, as $a_j > 4$.  Since there are $n-1$ elements in the sequence $\{a_i\}_{i=1}^{n-1}$, the largest element will be at least $n+1$.  Then the product of the $a_i$ divided by $n!$ will be at least $\frac{n+1}{\sqrt{n}^2}>1$, which is a contradiction.
\end{proof}
\end{lemma}

We return to the proof of Theorem~\ref{nset} and split into cases based on the properties of $\{a_i\}_{i=1}^{n-1}$, showing that none of them are possible.

Case 1: $\{a_i\}_{i=1}^{n-1}$ includes 2. 

Let's say that the sequence $(a_i)$ includes $1, 2, \ldots, k$, but not $k+1$.  Then those $k$ elements of $x$, call them $x_1, x_2, \ldots, x_k$, form a pseudoclique.  First consider the case in which $k=n-1$.  Since $a_1=|O|$, then $|O|=n-1$ and the stabilizer of $O$ has size $\frac{|S_n|}{(n-1)!}=n$.  But $\operatorname{Stab}(O)$ is a normal subgroup of $S_n$ by Lemma~\ref{stab1}, and there are no normal subgroups of $S_n$ of size $n$.  Thus $k\neq n-1$, so the values of $(a_i)$ must decrease by at least two at some point.  Now suppose $k<\sqrt{n}-2$.  Then $a_{k+1}\ge a_k+2$, which will make the product of the $(a_i)$ too big for the reason given in the proof of Lemma~\ref{l3}.  Next suppose that $k\ge \sqrt{n}$.  Say that we go from $k+t$ to $k$ by stabilizing an element $x_{k+1}$; let these other $t$ elements be $x_{k+1}, \ldots, x_{k+t}$.  Then instead of stabilizing $x_{k+1}$ and then $x_k$, let's stabilize $x_k$ and then $x_{k+1}$.  Then $x_{k+1}$ has to get into an orbit of size $k$ after $x_k$ is stabilized, or else the rest would only take at most $k-1$ steps.  Since $k\ge t-1$ and $x_1, \ldots, x_{k-1}$ are still in an orbit, $x_{k+1}$ must join that orbit unless $t\ge k-2$ and $k\le n/2$.  In the former case, in order for the sequence to take the correct number of moves, it can only decrease by 1 at a time, meaning $x_{k+1}$ forms a pseudoclique with $x_1, \ldots, x_{k-1}$, and thus with $x_k$ as well.  This contradicts the maximality of $k$.  Thus $t\ge k-2$.  Then the minimal product of the $a_i$ is at least $k!(2k-2)(2k-1)\cdots (n+k-3)$.  Dividing this by $(n-1)!$ gives $\dfrac{(n+k-3)\cdots (2k-2)}{(n-1)\cdots (k+1)}$, which, since $\sqrt{n}\le k\le n/2$, is at least $2^{\sqrt{n}} > n$.  Thus the size of $\Gamma_1$ is greater than $n!$, contradiction. 

Case 2: $\{a_i\}_{i=1}^{n-1}$ does not include $2$, and is not equal to the sequence $\{n, n-1, \ldots, 3\}$.  

We know that $a_{n-1}=3$, or else our product will be at least $4\cdot 5\cdot\cdots \cdot n+1 > n!$.  For the same reason, $\{a_i\}_{i=1}^{n-1}$ must contain consecutive numbers from at least $a_k = n/2$ down through $3$.  Because $A_n$ is the only subgroup of $S_n$ with index $2$, the group must act as $A_{n-k}$ on the orbit corresponding to the term $a_k$.  We know that the sequence must skip a number at some point; then simply apply the same argument as in Case 1 to show that this implies that the size of the original group is greater than $n!$.

Case 3: $\{a_i\}_{i=1}^{n-1} = \{n, n-1, \ldots, 3\}$.

Then there are $n$ elements on which $S_n$ acts as $A_n$.  Then the stabilizer of these $n$ elements is a normal subgroup of size $2$ in $S_n$, which is impossible.
\end{proof}

\section{Multiple labelings and obtaining subgroups}
\label{gen1}
In this section we present two ways to generalize the notion of the distinguishing number.  First, we consider labeling elements of $X$ with tuples rather than single labels.  We can also ask which subgroups of the original group may be obtained given only the restriction that it preserve some fixed labeling.

\subsection{Multiple labelings}
Going back to the construction in Theorem~\ref{con}, it may initially seem strange that if $S_n$ acts via all permutations of an $n$-element subset of $X$, that it may still have a distinguishing number less than $n$.  It is therefore a natural question to ask what the distinguishing numbers are for a group acting on multiple orbits is if there is some known link between how it acts on the orbits.  We present one simple case here and comment on further directions of study in the last section of this paper.

\begin{theorem}
\label{mul}
Let $X$ be a set with $nk$ elements partitioned into $k$ subsets $X_1, X_2, \ldots, X_k$ of size $n$.  Let $S_n$ act on $X$ by acting via every possible permutation on each subset simultaneously.  Then $D_{S_n}(X) = \ceil{\sqrt[k]{n}}$.
\end{theorem}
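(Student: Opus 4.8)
The plan is to think of a labeling of $X$ as a way of simultaneously coloring $k$ copies of $[n]$, and to translate the condition that the labeling be distinguishing into a condition on how the induced colorings of the individual copies interact. Fix $r$ labels. A labeling $\phi\colon X\to\{1,\dots,r\}$ restricts to a function $\phi_i\colon X_i\to\{1,\dots,r\}$ on each block, which we may record as the multiset (or, after choosing the identification $X_i\cong[n]$, the tuple) of color-classes on that block. A permutation $\sigma\in S_n$ preserves $\phi$ if and only if it preserves $\phi_i$ for every $i$ — that is, $\sigma$ lies in the stabilizer of each $\phi_i$ inside $S_n$, where $S_n$ acts on colorings of $[n]$ by permuting positions. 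So the key observation is: $\phi$ is distinguishing $\iff \bigcap_{i=1}^k \operatorname{Stab}_{S_n}(\phi_i) = \{e\}$. The stabilizer of a single coloring of $[n]$ with color-class sizes $m_1,\dots,m_r$ (where $\sum m_j=n$) is the Young subgroup $S_{m_1}\times\cdots\times S_{m_r}$, so we are asking when $k$ Young subgroups of $S_n$ intersect trivially.

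For the upper bound $D_{S_n}(X)\le\lceil\sqrt[k]{n}\rceil$, write $r=\lceil\sqrt[k]{n}\rceil$, so that $r^k\ge n$. I would exhibit an explicit labeling: identify the positions $1,\dots,n$ of $[n]$ with $n$ distinct points of the cube $\{1,\dots,r\}^k$ (possible since $r^k\ge n$), and on block $X_i$ color position $p$ by the $i$-th coordinate of the point assigned to $p$. Then a permutation $\sigma$ preserving all $\phi_i$ preserves every coordinate of the point assigned to each position, hence fixes each point, hence is the identity. So $r$ labels suffice.

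For the lower bound I would argue by a counting/pigeonhole estimate. Suppose $r$ labels suffice, so there exist colorings $\phi_1,\dots,\phi_k$ of $[n]$, each with at most $r$ color classes, whose stabilizers intersect trivially. Consider the map $[n]\to(\text{color})^k$ sending position $p$ to the $k$-tuple of its colors in $\phi_1,\dots,\phi_k$. If two positions $p\neq q$ received the same $k$-tuple, then the transposition $(p\,q)$ would preserve every $\phi_i$, contradicting triviality of the intersection; hence this map is injective, so $n\le r^k$, i.e. $r\ge\sqrt[k]{n}$, and since $r$ is an integer, $r\ge\lceil\sqrt[k]{n}\rceil$. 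Combining the two bounds gives $D_{S_n}(X)=\lceil\sqrt[k]{n}\rceil$.

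I expect the argument to be essentially this clean, since the group is the full symmetric group and the orbit structure is completely transparent; the only point that needs a little care is the faithfulness/"distinguishing means intersection of stabilizers is trivial" reduction, and the observation that for $S_n$ it suffices to rule out transpositions (because a nontrivial permutation moves some point, so some transposition is "dominated" by it in the sense of lying in all the same Young stabilizers — more precisely, if $\sigma\neq e$ fixes all $\phi_i$ then picking $p$ with $\sigma(p)=q\neq p$, the two positions $p,q$ have identical color-tuples, which is exactly the pigeonhole collision). So the main — really the only — obstacle is making sure the correspondence between labelings and tuples of Young subgroups, and the reduction to transpositions, is stated precisely; the bounds themselves then fall out immediately.
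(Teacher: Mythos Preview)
Your proposal is correct and is essentially the same argument as the paper's: assign to each position a distinct $k$-tuple from $\{1,\dots,r\}^k$ and read off the $i$th coordinate on block $X_i$ for the upper bound, and use pigeonhole on the induced $k$-tuple map to find a transposition preserving all labels for the lower bound. Your extra scaffolding about intersections of Young stabilizers is fine but not needed---the paper states the same two steps more tersely.
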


\begin{proof}
First we show that $\ceil{\sqrt[k]{n}}$ labels suffice by constructing a $t$-distinguishing labeling for $n=t^k$.  For each subset $X_i$, designate a given order on its elements. Then pick a different string of $k$ integers from $1$ through $t$: $(c_1, c_2, \ldots, c_k)$ for each value $1$ through $n$ and label the $k$ elements in each position of the ordering with those labels.  Then every non-identity element of $S_n$ will inevitably send some string of labelings to a different string of labelings, so only the identity can preserve all the labelings.  \\

Now assume that there is a distinguishing labeling using fewer labels.  Then if there are $t$ labels, we have $n>t^k$ and by the Pigeonhole Principle, two elements must share the same string and thus can be exchanged.
\end{proof}

As the previous proof shows, a more natural way of looking at this question is by allowing the labels to be tuples.  To this end, we have the following definition.

\begin{definition}
$D^k_{\Gamma}(X)$ is the smallest number $r$ of labels such that there exists a labeling $\phi: X\rightarrow \{1, 2, \ldots, r\}^k$ with the property that the only element of $\Gamma$ that preserves the labels is the identity.
\end{definition}

Under this notation, if $|X|=n$ and $S_n$ acts on $X$ via all possible permutations, Theorem~\ref{mul} says that $D^k_{S_n}(X) = \ceil{\sqrt[k]{n}}$.

\subsection{Obtaining subgroups of abelian groups}
Here we consider the question of which subgroups can be obtained through labelings for abelian groups.

\begin{theorem}
\label{abe}
Let $G$ be a finite abelian group that acts faithfully on a set $X$.  Then for any subgroup $H\le G$, there is a labeling $\lambda$ with two colors such that $G\cap P_{\lambda} = H$.

\begin{proof}
Pick a set $\{x_j\}$ of representatives of the orbits of the action of $G$ on $X$ as $j$ indexes these orbits.  Then label all elements of the form $h(x_j)$ in orbit $j$ with 1, where $h\in H$.  Label all other elements with 2.  

First we show that all $h\in H$ preserves these labels.  Indeed, if $x\in X$ is in the $H$-orbit of some $x_j$, then $h(x)$ is as well, and both are labeled 1.  Otherwise, $x$ and $h(x)$ are both labeled 2.  Thus $x$ and $h(x)$ are always given the same label.

Now assume $g\in G$ preserves the labels; we must show that $g\in H$.  We claim that $g$ is identified with an element of $H$ under the projection $G/\operatorname{Stab}(O_j)$ for each orbit $O_j$.  Indeed, let $h_j$ be an element of $H$ such that $h_j(x_j)=g(x_j)$.  Then for any $x_j'\in O_j$, there is some $g'\in G$ such that $g'(g(x_j))=x_j'$. Then $g(x_j')=g(g'(h_j(x_j)))=h_j(g'(g(x_j))) = h_j(x_j')$.  Therefore $gh_j^{-1}\in \operatorname{Stab}(O_j)$, as desired.  Now consider the image of $g$ under the projection map $G\rightarrow G/H$.  We have shown that it must be in the intersection of the projections of $\bigcap_j \operatorname{Stab}(O_j)$.  But since $G$ acts faithfully on $X$, $\bigcap_j \operatorname{Stab}(O_j)=\{e\}$.  Thus $g\in H$, as desired. 
\end{proof}
\end{theorem}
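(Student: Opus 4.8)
The plan is to build the labeling $\lambda$ explicitly from the orbit structure and then check the two inclusions $H\subseteq G\cap P_\lambda$ and $G\cap P_\lambda\subseteq H$ in turn. Fix a representative $x_j$ of each $G$-orbit $O_j$ of $X$, and color an element $x\in O_j$ with color $1$ if it lies in the $H$-orbit of $x_j$ and with color $2$ otherwise. Since the $H$-orbits refine the $G$-orbits, this is a well-defined two-coloring, determined separately on each orbit; this separability is what will let the whole argument run orbit by orbit.

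The inclusion $H\subseteq G\cap P_\lambda$ should be routine: for $h\in H$ and $x\in O_j$, the element $h(x)$ lies in the $H$-orbit of $x_j$ exactly when $x$ does, because $H$ is a subgroup, so $h$ preserves every color. For the reverse inclusion, take $g\in G\cap P_\lambda$ and aim to show $g\in H$. Fix an orbit $O_j$: since $g$ preserves colors, $g(x_j)$ has color $1$, so $g(x_j)=h_j(x_j)$ for some $h_j\in H$. Here the abelian hypothesis enters --- for any $y=g'(x_j)\in O_j$ one gets $g(y)=g'(g(x_j))=g'(h_j(x_j))=h_j(y)$, so $g$ and $h_j$ agree on all of $O_j$, that is, $gh_j^{-1}\in\operatorname{Stab}(O_j)$. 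Hence the image of $g$ in $G/H$ lies in the image of $\operatorname{Stab}(O_j)$ for every $j$, i.e.\ $g\in\bigcap_j\bigl(H\cdot\operatorname{Stab}(O_j)\bigr)$.

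The main obstacle, and the step I would scrutinize most carefully, is passing from this to $g\in H$ --- equivalently, establishing $\bigcap_j\bigl(H\cdot\operatorname{Stab}(O_j)\bigr)=H$. Faithfulness supplies only $\bigcap_j\operatorname{Stab}(O_j)=\{e\}$, and since the image under $G\to G/H$ of an intersection of subgroups can be strictly smaller than the intersection of their images, this does not formally close the gap. So the real work is to show that the images of the $\operatorname{Stab}(O_j)$ in $G/H$ already meet only in the identity, and I would expect this to require genuine use of how the orbits sit inside $X$ (and of the abelian structure, which at least makes every $H\cdot\operatorname{Stab}(O_j)$ a subgroup containing the kernel of $G\to G/H$) rather than faithfulness in the abstract. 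This is the delicate point of the argument, and I would want to pin down precisely what makes it go through.
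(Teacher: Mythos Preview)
Your construction and the two inclusion arguments mirror the paper's proof essentially line for line, including the use of commutativity to show that $g$ and $h_j$ agree on all of $O_j$. The paper then asserts, in effect, that since $gh_j^{-1}\in\operatorname{Stab}(O_j)$ for each $j$ and $\bigcap_j\operatorname{Stab}(O_j)=\{e\}$ by faithfulness, the image of $g$ in $G/H$ is trivial. You are right to flag this step: it silently replaces $\bigcap_j\pi(\operatorname{Stab}(O_j))$ by $\pi\bigl(\bigcap_j\operatorname{Stab}(O_j)\bigr)$, and those two can differ.

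In fact the gap is fatal, because the statement as written is false. Take $G=\mathbb{Z}/2\times\mathbb{Z}/2=\{e,a,b,ab\}$ acting on $X=\{p,q,r,s\}$ by letting $a$ fix $p,q$ and swap $r,s$, while $b$ swaps $p,q$ and fixes $r,s$. The action is faithful, with orbits $O_1=\{p,q\}$ and $O_2=\{r,s\}$ and pointwise stabilizers $\operatorname{Stab}(O_1)=\langle a\rangle$, $\operatorname{Stab}(O_2)=\langle b\rangle$. Let $H=\langle ab\rangle$. Then $H\cdot\operatorname{Stab}(O_1)=H\cdot\operatorname{Stab}(O_2)=G$, so $\bigcap_j\bigl(H\cdot\operatorname{Stab}(O_j)\bigr)=G\neq H$, and the paper's labeling (which colors all four points $1$) returns $G$, not $H$. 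Worse, \emph{no} labeling of $X$ can return $H$: any coloring preserved by $ab$ must be constant on each of $O_1$ and $O_2$, and then $a$ and $b$ preserve it as well. So the step you singled out is not merely delicate---it is where the argument, and the theorem, actually breaks.
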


\section{The consumption ordering of partitions}
\label{poset}
\subsection{Introduction}
Let $\Gamma$ act on $X$.  Then given any labeling $\phi$ of $X$, we can define the type of $\phi$ to be the partition of $|X|$ with $i^\text{th}$ part (in decreasing order of size) the number of elements of $X$ given the $i^\text{th}$ most common label.  Then given a labeling $\phi$ of type $\lambda$, the permutations of $X$ that preserve the labelings are given by $P_{\phi}=S_{\lambda_1}\times S_{\lambda_2}\times\cdots\times S_{\lambda_k}$, where $S_{\lambda_i}$ acts on the elements.  The intersection of the image of $\Gamma$ in $\operatorname{Sym}(X)$ and $P_{\phi}$ is the subgroup of $\Gamma$ that preserves the labelings.  Henceforth for convenience, we will use $\Gamma$ to also represent its image in $\operatorname{Sym}(X)$ if $\Gamma$ acts on $X$.

The distinguishing number of $\Gamma$, then, is simply the shortest length of a partition $\lambda$ such that there is a labeling $\phi$ of $X$ with type $\lambda$ such that $\Gamma\cap P_{\phi}=\{e\}$.  

While the distinguishing number gives information on the number of labels necessary to distinguish a group action, it doesn't give any more information about the partition of the labels itself.  Studying these partitions themselves is a natural extension.

\begin{definition}
[distinguishing partition]
Let $H$ be a subgroup of $S_n$.  A set partition $[n] = A_1\cup\cdots\cup A_k$ is said to distinguish $H$ if the only element of $H$ that preserves each of the subsets in the set partition is the identity.  The set partition itself is called a \textit{distinguishing set partition}, while a \textit{distinguishing partition} refers to a partition of $n$ for which there exists a distinguishing set partition.
\end{definition}

\begin{definition}
[consumes] A subgroup $H<S_n$ \textit{consumes} a partition $\lambda\vdash n$ if there is a set partition of $[n]$ with weight $\lambda$ that distinguishes $H$.
\end{definition}

Note that the previous definition simply says that if there is a distinguishing labeling corresponding to a partition $\lambda\vdash n$ with respect to the action of $H$ on $[n]$, then $H$ consumes $\lambda$.  This induces a poset structure on partitions of $n$ with the following ordering relation: $\lambda \ge_c \mu$ if every $H$ that consumes $\lambda$ also consumes $\mu$.  We call this ordering the \textit{consumption} ordering.  It is not immediate that this defines a partial ordering, for it is conceivable that two different partitions are consumed by the same subgroups of $S_n$.  This would violate the condition of antisymmetry.  We show that this does not happen in the next theorem.

\subsection{Structure of the poset}
Recall the dominance ordering on partitions:

\begin{definition}
[dominance order]
Let $\lambda$ and $\mu$ be two partitions of $n$ with parts in decreasing order.  Under the dominance ordering, $\lambda\ge \mu$ if and only if $\sum_{i=1}^k\lambda_i\ge \sum_{i=1}^k\mu_i$ for all $i$ where the expressions are defined.
\end{definition}

We show that the consumption ordering is indeed a partial ordering by showing it is consistent with the dominance ordering.  We use $\ge_c$ to denote the consumption ordering and $\ge$ to denote the dominance ordering.

\begin{theorem}
\label{dom}
If $\lambda\ge_c \mu$, then $\lambda\ge \mu$.
\end{theorem}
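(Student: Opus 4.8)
The plan is to prove the contrapositive-style statement by producing, for each pair $\lambda \not\geq \mu$ in the dominance order, a subgroup $H < S_n$ that consumes $\lambda$ but not $\mu$; this witnesses $\lambda \not\geq_c \mu$. So suppose $\lambda$ does not dominate $\mu$. The natural first step is to reduce to an \emph{adjacent} failure: using the fact that the dominance order is generated by covering relations (moving a single box up in a Young diagram), it suffices to understand the situation where $\mu$ covers $\lambda$ is \emph{false} in a minimal way — more precisely, I would find the first index $k$ at which $\sum_{i\le k}\lambda_i < \sum_{i\le k}\mu_i$, and work with the ``prefix'' data there. Let me set $a = \lambda_1+\cdots+\lambda_k$ and $b = \mu_1+\cdots+\mu_k$, so $a < b$.

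The key construction: I want a subgroup $H$ whose orbit structure forces any distinguishing set partition to have a ``large block'' that $\mu$ cannot accommodate but $\lambda$ can. The cleanest candidate is to take $H$ to be (a copy of) a symmetric group $S_m$ acting naturally on an $m$-element subset of $[n]$ and fixing the rest, for a suitable $m$. Such an $H$ is distinguished by a set partition of weight $\nu$ precisely when the multiset of block sizes restricted to the $m$ moved points has all parts $\le$ (number of blocks meeting that set) in the right way — concretely, $S_m$ acting naturally needs all $m$ points in distinct blocks, so $H = S_m$ (fixing $n-m$ points) consumes $\nu \vdash n$ iff $\nu$ has at least $m$ parts, i.e. $\ell(\nu) \ge m$. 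That only sees the \emph{length} of a partition, which recovers the distinguishing number but is too coarse to separate $\lambda$ from $\mu$ when they have the same length. So I would instead use a \emph{product} of symmetric groups $H = S_{m_1}\times \cdots \times S_{m_r}$ acting on disjoint blocks summing to $\le n$: here $H$ consumes $\nu$ iff the set partition can be chosen so that each of the $r$ natural $S_{m_j}$-sets is split into singletons, which is a Hall-type / Gale–Ryser condition on $\nu$ versus $(m_1,\dots,m_r)$. The precise statement I expect is: $H = \prod S_{m_j}$ consumes $\nu$ if and only if $\nu^{T} \ge (m_1,\dots,m_r)$ (sorted) in dominance, or equivalently the conjugate condition. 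Choosing $(m_1,\dots,m_r)$ to be essentially $\lambda^{T}$ truncated appropriately at the level $k$ where domination fails should give a group consumed by $\lambda$ but not by $\mu$.

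So the steps, in order, are: (1) recall that $\nu$ dominates $\rho$ iff $\rho^{T}$ dominates $\nu^{T}$, so I can phrase everything in terms of conjugates; (2) establish the combinatorial lemma characterizing exactly which partitions a product of symmetric groups $\prod_j S_{m_j}$ (each acting naturally on its own block, all blocks disjoint inside $[n]$) consumes — this is a bipartite degree-sequence / marriage argument (greedily assign the points of each $S_{m_j}$-block to distinct parts of the set partition, feasible exactly under a Gale–Ryser inequality); (3) given $\lambda \not\geq \mu$, unwind the lemma to select block sizes $m_j$ so that the Gale–Ryser inequality holds for $\lambda$ but fails for $\mu$ at the offending index $k$; (4) conclude $\lambda \not\geq_c \mu$, hence $\ge_c$ refines $\ge$, in particular $\ge_c$ is antisymmetric and is a genuine partial order.

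The main obstacle I anticipate is step (2)–(3): getting the characterization of ``consumes'' for products of symmetric groups exactly right, and then matching its inequality to the precise point of failure of dominance. One has to be careful that the $m_j$ blocks fit inside $[n]$ (their sum plus the number of required extra singleton classes must not exceed $n$), and that the construction genuinely \emph{fails} for $\mu$ rather than merely not obviously succeeding — i.e. one must verify no clever set partition of weight $\mu$ can still distinguish $H$. I expect this to come down to a clean application of the Gale–Ryser theorem once the bookkeeping with conjugate partitions is set up correctly.
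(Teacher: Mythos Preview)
Your approach is essentially the paper's own, and your characterization in step~(2) is exactly the right lemma: for $H=\prod_j S_{m_j}$ acting on disjoint blocks, $H$ consumes a partition $\nu$ if and only if $\nu^{T}\ge (m_1,\dots,m_r)$ in dominance (equivalently, by Gale--Ryser, there is a $0$--$1$ matrix with row sums $\nu$ and column sums $(m_j)$). The paper takes precisely this route, choosing $(m_1,\dots,m_r)=\lambda^{T}$ so that $H$ is the column subgroup of a filled Young diagram of shape $\lambda$; the row partition then witnesses that $H$ consumes $\lambda$, and the Gale--Ryser condition shows $H$ consumes $\mu$ only if $\mu^{T}\ge\lambda^{T}$, i.e.\ $\lambda\ge\mu$.

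Two simplifications relative to your outline: first, there is no need to locate the failing index $k$ or to truncate --- taking the full conjugate $(m_j)=\lambda^{T}$ already works, and since $\sum_j(\lambda^{T})_j=n$ the blocks fill $[n]$ exactly, so your fitting concern evaporates. Second, you do not need the covering-relation reduction at all; the single subgroup $H=S_{\lambda^{T}_1}\times\cdots\times S_{\lambda^{T}_k}$ handles every $\mu$ with $\lambda\not\ge\mu$ in one stroke. The paper's write-up compresses the Gale--Ryser step into the phrase ``taking the transpose,'' so your plan to spell that out via a marriage/degree-sequence argument would in fact yield a cleaner proof than the original.
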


\begin{proof}
Assume otherwise.  Then let $\lambda$ and $\mu$ be different partitions of $n$ such that $\lambda\ge_c\mu$ and $\lambda\not\ge\mu$.  Let $\lambda^\intercal = (a_1, a_2, \ldots, a_k)$ and take a labeling $X$ of a Young diagram of $\lambda$.  Then let $H = S_{a_1}\times\cdots\times S_{a_k}$ where each $S_{a_i}$ acts on the $i$th column of the labelled Young diagram.  Then this consumes $\lambda$ because the only intersection of $H$ with the subgroup that permutes the rows is the identity element.  

We claim that $H$ does not consume $\mu$.  Take any labeling $Y$ of $\mu$.  If $H$ consumes $\mu$ under this labeling, then all the the elements of every column in the labeling $X$ of $\lambda$ must be in different columns of $Y$.  Taking the transpose, this tells us that $\mu^\intercal\ge \lambda^\intercal\Rightarrow \lambda\ge\mu$, a contradiction.  Thus if every subgroup that consumes $\lambda$ also consumes $\mu$, then $\lambda\ge\mu$, as desired.
\end{proof}

\begin{corollary}
\label{well}
The consumption ordering is a well-defined poset.
\end{corollary}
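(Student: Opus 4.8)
The plan is to verify the three axioms of a partial order for $\ge_c$ on the set of partitions of $n$: reflexivity, transitivity, and antisymmetry. Reflexivity is immediate, since trivially every subgroup $H$ that consumes $\lambda$ consumes $\lambda$, so $\lambda \ge_c \lambda$. Transitivity is equally immediate from the definition: if $\lambda \ge_c \mu$ and $\mu \ge_c \nu$, then any $H$ consuming $\lambda$ consumes $\mu$, hence consumes $\nu$, so $\lambda \ge_c \nu$. These two steps require no new ideas; the entire content of the corollary is antisymmetry, which is exactly the concern raised in the paragraph preceding Theorem~\ref{dom} (that two distinct partitions might be consumed by precisely the same subgroups of $S_n$).

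For antisymmetry, I would argue as follows. Suppose $\lambda \ge_c \mu$ and $\mu \ge_c \lambda$. By Theorem~\ref{dom}, $\lambda \ge_c \mu$ gives $\lambda \ge \mu$ in the dominance order, and symmetrically $\mu \ge_c \lambda$ gives $\mu \ge \lambda$. Since the dominance order on partitions of $n$ is a genuine partial order (in particular antisymmetric — a standard fact, which one can note follows from comparing the partial sums $\sum_{i=1}^k \lambda_i$ and $\sum_{i=1}^k \mu_i$ in both directions), we conclude $\lambda = \mu$. Hence $\ge_c$ is antisymmetric, and combined with reflexivity and transitivity it is a well-defined partial order.

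There is no real obstacle here: the substantive work was already carried out in Theorem~\ref{dom}, whose proof exhibits, for each $\lambda$, an explicit witness subgroup (a product of symmetric groups acting on the columns of a Young tableau of shape $\lambda$) that consumes $\lambda$ but fails to consume any $\mu$ not dominating $\lambda$. The corollary is then just the observation that consistency with a known partial order forces $\ge_c$ to be one as well. The only point worth stating carefully is that we are invoking antisymmetry of dominance, not merely Theorem~\ref{dom}; everything else is bookkeeping.
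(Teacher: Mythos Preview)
Your proposal is correct and follows essentially the same approach as the paper's own proof: reflexivity and transitivity are dismissed as immediate, and antisymmetry is obtained by applying Theorem~\ref{dom} in both directions to get $\lambda\ge\mu$ and $\mu\ge\lambda$ in the dominance order, whence $\lambda=\mu$. The paper's version is just terser; your added remarks about the content of Theorem~\ref{dom} and the explicit invocation of antisymmetry of dominance are helpful elaborations but do not change the argument.
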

\begin{proof}
Reflexivity and transitivity are clear.  For antisymmetry, note that if $\lambda\ge_c\mu$ and $\mu\ge_c\lambda$, by Theorem~\ref{dom} we have $\lambda\ge\mu$ and $\mu\ge\lambda$, so $\lambda=\mu$ as desired.
\end{proof}

By Corollary~\ref{well} we may use the symbol $>_c$ rather than $\ge_c$ to denote this ordering relation.

The converse of Theorem~\ref{dom} is not true.  The following example illustrates this.
\begin{example}
\label{31}
$(3,1)\not >_c(2,2)$.
\end{example}
\begin{proof}
Consider the normal Klein four subgroup of $S_4$: $H=\{(), (1,2)(3,4), (1,3)(2,4), (1,4)(2,3)\}$.  Then $H$ consumes the partition $(3, 1)$, because the stabilizer of any element is the identity.  However, given any labeling of the partition $(2,2)$, interchanging the two columns always corresponds to an element in $H$.  Thus $H$ does not consume $(2,2)$.
\end{proof}

\begin{example}
\label{32}
$(3, 2) > (3,1,1)$.
\end{example}

\begin{proof}
The following diagram gives a ``proof without words."

\begin{figure}[H]
\label{wo}
\begin{subfigure}{.5\textwidth}
  \centering
  $\young(abc,de)$
  
  \label{fig:sub1}
\end{subfigure}%
\begin{subfigure}{.5\textwidth}
  \centering
  $\young(abc,d,e)$
  
  \label{fig:sub2}
\end{subfigure}
\end{figure}
\end{proof}

The last example illustrates a general principle that can be used to find ordering relations within the consumption poset.  Fix a labeling of Young diagrams to simply go from $1$ through $n$ from left to right, in each row in order.  If the row subgroup of one partition completely contains that of another one, then the first partition must be greater than the second in the consumption ordering.  However, this principle cannot be used to find all ordering relations.  The following example can be checked to be true by a computer, but not by the principle described above.

\begin{example}
\label{411}
$(4,1,1) >_c (2,2,2)$.
\end{example}

The following figures show the entire poset for $n=4$ and $n=6$.  
\begin{figure}[H]
\label{4}
\centering
\begin{tikzpicture}
    \node (4) at (0, 0) {${\yng(4)}$};
    \node (31) at (-2, -2) {$\yng(3,1)$};
    \node (22) at (2, -2) {$\yng(2,2)$};
    \node (211) at (0, -4) {$\yng(2,1,1)$};
    \node (1111) at (0, -6.5) {$\yng(1,1,1,1)$};
    
    \draw (4) -- (31) -- (211) -- (1111);
    \draw (4) -- (22) -- (211);
\end{tikzpicture}
\caption{The consumption ordering for $n=4$}
\end{figure}
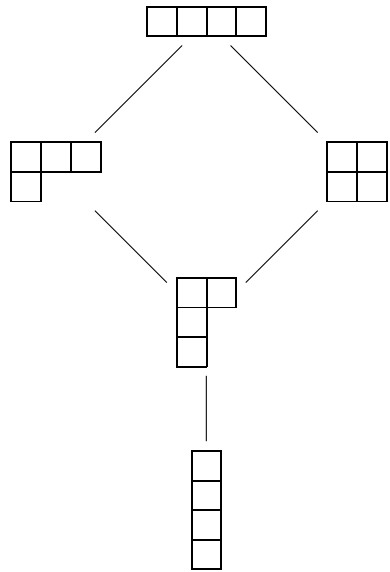

\begin{figure}[H]
\label{6}
\centering
\begin{tikzpicture}
    \node (6) at (0, 0) {$\yng(6)$};
    \node (51) at (0, -2) {$\yng(5,1)$};
    \node (42) at (-4, -3) {$\yng(4,2)$};
    \node (33) at (4, -3) {$\yng(3,3)$};
    \node (411) at (-4, -5) {$\yng(4,1,1)$};
    \node (321) at (-4, -7) {$\yng(3,2,1)$};
    \node (3111) at (-4, -9) {$\yng(3,1,1,1)$};
	\node (222) at (4, -7) {$\yng(2,2,2)$};
	\node (2211) at (0, -11) {$\yng(2,2,1,1)$};
	\node (21111) at (0, -14) {$\yng(2,1,1,1,1)$};
	\node (111111) at (0, -17) {$\yng(1,1,1,1,1,1)$};

    \draw (6) -- (51) -- (42) -- (411) -- (321) -- (3111) -- (2211) -- (21111) -- (111111);
    \draw (51) -- (33) -- (222) -- (2211);
    \draw (411) -- (222);
\end{tikzpicture}
\caption{The consumption ordering for $n=6$}
\end{figure}
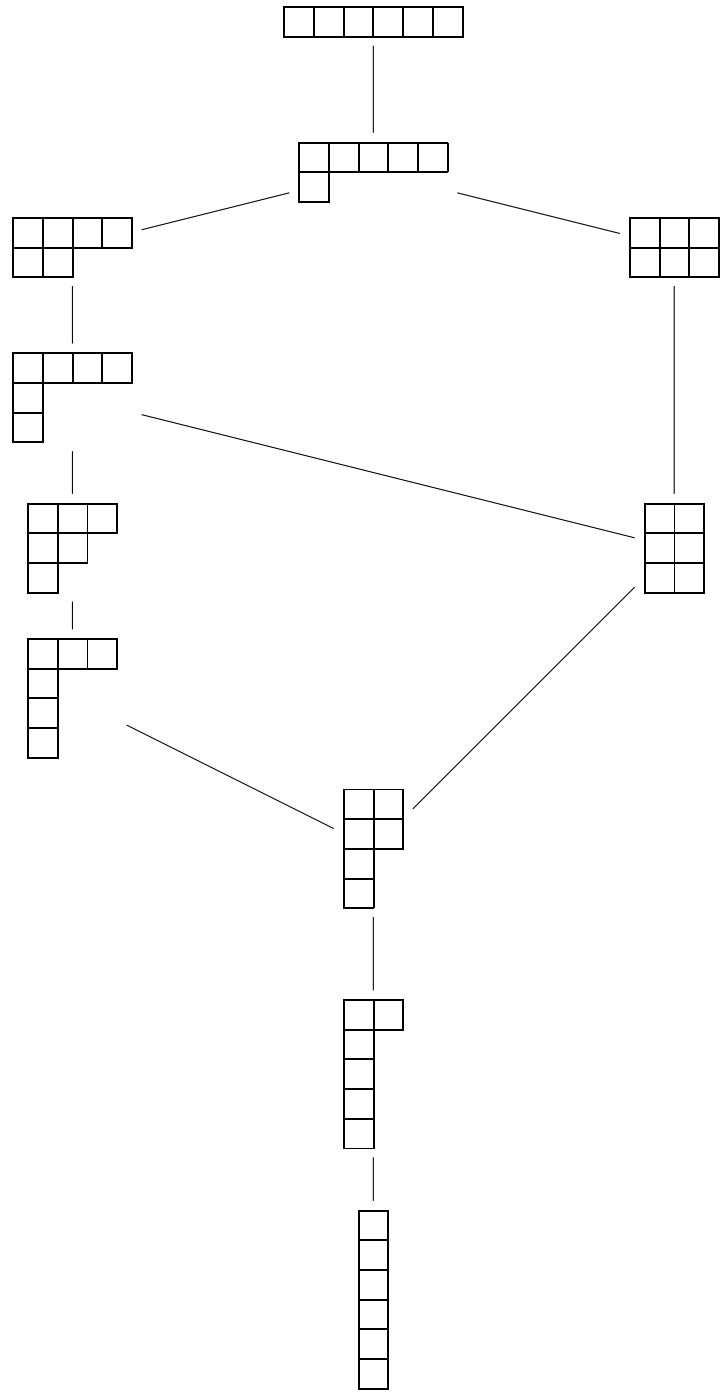

\section{Partitioning Cartesian powers of a set}
\label{gen}
In this section, we generalize the notion of distinguishing labelings in a different way.  Let $X=\{x_1, x_2, \ldots, x_n\}$ and let $V$ be the $\mathbb{C}-$vector space with $X$ as a basis.  Then by extending the action of $S_n$ linearly, $S_n$ naturally acts on $V$ by permuting the elements of $X$.  The guiding question we will explore is: 

\begin{question}
How can we go from a subgroup $G$ of $S_n$ to a subgroup $H\le G$ by restricting $S_n$ to its elements whose action preserve some partition?
\end{question}

For instance, let $n=5$ and $G=S_n$.  If we want $H$ to be the subgroup $S_3\times S_2$, where $S_3$ acts on the first three elements and $S_2$ on the last two, then we can partition the elements of $X$ by labeling $x_1, x_2, x_3$ with 1 and $x_4, x_5$ with 2.  With partitions of this form, we can clearly get all subgroups of the form $S_{\lambda_1}\times S_{\lambda_2}\times\cdots\times S_{\lambda_k}$ from $S_n$.

There are other ways of forming restrictions; for instance, we can add ``edges" to pairs of elements of $X$ and require that a permutation of $X$ preserves these edges.  This is, of course, the case of the automorphism group $\Gamma$ of corresponding graph.  Then the distinguishing number of the graph is simply the smallest length of a partition to go from $G=\Gamma$ to the trivial group.  Similarly, we may fix any subgroup of $S_n$ and ask the same question; this is distinguishing number with respect to arbitrary faithful group actions.

It turns out that we can in fact obtain all subgroups of $S_n$ through an appropriate partition.  However, it may not be a partition of the elements themselves.  For example, we can look at all pairs of elements.  Then $S_n$ acts on these pairs by acting on each entry separately.  If we partition these pairs and require that a permutation of the pairs preserve these partitions, we get a row subgroup which we may denote $P_{\lambda}$.  Then the corresponding subgroup of $S_n$ we obtain is $S_n\cap P_{\lambda}$.  We will show that through this method, we can in fact achieve every subgroup of $S_n$.

\subsection{Structure of automorphism groups of graphs}
Consider the action of $S_n$ on $S^2V = V\otimes V/(x_i\otimes x_j-x_j\otimes x_i)$ where $S_n$ permutes $X$ as usual.  

\begin{proposition}
\label{sym2}
Let $G$ be an undirected graph with vertex set $V=\{v_1, v_2, \ldots, v_n\}$.  Label the element $x_i\otimes x_j\in \operatorname{Sym}^2 V$ with the number of edges between $v_i$ and $v_j$ in $G$.  Then $\operatorname{Aut}(G)$ is canonically isomorphic to the subgroup of $S_n$ that preserves the labels on $\operatorname{Sym}^2 V$.

\begin{proof}
Every element of $\operatorname{Aut}(G)$ must preserve the labels on $\operatorname{Sym}^2 V$ in order for it to send edges to edges.  Conversely, if a subgroup of $S_n$ preserves the labels, then it is a permutation of vertices that sends edges to edges, as desired.
\end{proof}

\begin{corollary}
The case of simple undirected graphs follows when $x_i\otimes x_j$ is labeled $1$ if there is an edge between $v_i$ and $v_j$ and $0$ otherwise. 
\end{corollary}
\end{proposition}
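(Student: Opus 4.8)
The plan is to recognize the corollary as nothing more than the special case of Proposition~\ref{sym2} obtained by restricting attention to graphs whose every edge has multiplicity at most one and which have no loops. First I would observe that when $G$ is a simple undirected graph, the number of edges between $v_i$ and $v_j$ equals $1$ if $v_iv_j$ is an edge of $G$ and $0$ otherwise; hence the labeling prescribed in the corollary is literally the labeling $x_i\otimes x_j\mapsto(\text{number of edges between }v_i\text{ and }v_j)$ of Proposition~\ref{sym2}, now taking values in $\{0,1\}$ rather than in $\mathbb{Z}_{\geq 0}$.

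Next I would handle the diagonal of $\operatorname{Sym}^2 V$ explicitly: since a simple graph has no loops, every element $x_i\otimes x_i$ receives the label $0$, so these elements are preserved by \emph{every} permutation of $X$ and impose no constraint. Thus the subgroup of $S_n$ preserving the labels on all of $\operatorname{Sym}^2 V$ coincides with the subgroup preserving the labels on the off-diagonal elements $x_i\otimes x_j$, $i\neq j$ — that is, on the set of unordered pairs of distinct vertices. Applying Proposition~\ref{sym2} verbatim then gives that $\operatorname{Aut}(G)$ is canonically isomorphic to the subgroup of $S_n$ preserving this $\{0,1\}$-labeling, via the same canonical map that sends a graph automorphism to its underlying permutation of $V(G)\cong[n]$.

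There is no real obstacle here; the corollary is immediate once one notes the specialization. The only point worth a word of care is the ``no loops'' convention, which is why I would make the treatment of the diagonal of $\operatorname{Sym}^2 V$ explicit, so that the statement is genuinely an instance of Proposition~\ref{sym2} rather than merely an analogue of it. Equivalently, one may argue directly: a permutation of the vertices is an automorphism of a simple graph if and only if it maps edges to edges and non-edges to non-edges, and this is precisely the condition of preserving a $\{0,1\}$-valued labeling of the unordered pairs of distinct vertices.
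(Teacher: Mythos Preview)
Your proposal is correct and matches the paper's intent: the corollary is stated without proof precisely because it is the immediate specialization of Proposition~\ref{sym2} to simple graphs, where the edge-count labeling takes only the values $0$ and $1$. Your explicit treatment of the diagonal (no loops, so every $x_i\otimes x_i$ is labeled $0$ and imposes no constraint) is a nice point of care that the paper leaves implicit, but the underlying argument is the same.
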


We can also consider $V^{\otimes 2}$, which has basis $\{v_i\otimes v_j\}_{i, j = 1}^n$, and ask what subgroups labeling it can achieve.  This is equivalent to labeling $X^2$.  We have an analogue of Proposition~\ref{sym2}, which is proved similarly.

\begin{proposition}
\label{V2}
Let $G$ be a directed graph with vertex set $V=\{v_1, v_2, \ldots, v_n\}$.  Label the element $x_i\otimes x_j\in V^{\otimes 2}$ with the number of edges between $v_i$ and $v_j$ in $G$.  Then $\operatorname{Aut}(G)$ is canonically isomorphic to the subgroup of $S_n$ that preserves the labels on $V^{\otimes 2}$.
\end{proposition}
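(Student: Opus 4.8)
The plan is to mirror the proof of Proposition~\ref{sym2} almost verbatim, with the single structural change that we replace the symmetric square $\operatorname{Sym}^2 V$ by the full tensor square $V^{\otimes 2}$, which carries an ordered basis $\{x_i \otimes x_j\}$ rather than an unordered one. First I would fix the directed graph $G$ on vertex set $V = \{v_1, \ldots, v_n\}$ and define the labeling $\ell$ on $V^{\otimes 2}$ by setting $\ell(x_i \otimes x_j)$ to be the number of directed edges from $v_i$ to $v_j$; the key point, absent in the undirected case, is that $\ell(x_i \otimes x_j)$ and $\ell(x_j \otimes x_i)$ need not agree, so the labeling genuinely records the orientation data. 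The action of $S_n$ on $V^{\otimes 2}$ is $\sigma \cdot (x_i \otimes x_j) = x_{\sigma(i)} \otimes x_{\sigma(j)}$, so a permutation $\sigma$ preserves $\ell$ precisely when the number of edges $v_i \to v_j$ equals the number of edges $v_{\sigma(i)} \to v_{\sigma(j)}$ for every ordered pair $(i,j)$.

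The proof then splits into the two containments, exactly as before. For the forward direction, any $\sigma \in \operatorname{Aut}(G)$ is by definition a vertex permutation carrying directed edges to directed edges with multiplicity, hence it fixes every label on $V^{\otimes 2}$; note that here we really do need the action on \emph{ordered} pairs, since an automorphism of a digraph must respect orientation. For the reverse direction, if $\sigma \in S_n$ preserves all labels, then for each ordered pair $(i,j)$ the multiplicity of $v_i \to v_j$ equals that of $v_{\sigma(i)} \to v_{\sigma(j)}$, which is the statement that $\sigma$ is a digraph automorphism. The map $\operatorname{Aut}(G) \to S_n$ so obtained is injective (an automorphism is determined by its action on vertices) and we have just shown its image is exactly the label-preserving subgroup, so it is the asserted canonical isomorphism onto that subgroup.

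There is essentially no obstacle here: the proposition is stated as being ``proved similarly,'' and indeed the only thing one must be careful about is not quotienting out the transposition $x_i \otimes x_j - x_j \otimes x_i$, i.e.\ remembering that $V^{\otimes 2}$ is the ordered object whose basis is in bijection with $X^2$ (as the excerpt already remarks). If anything deserves a sentence of comment, it is that allowing arbitrary nonnegative integer labels handles directed multigraphs at once, and restricting to labels in $\{0,1\}$ recovers simple digraphs, in exact analogy with the corollary following Proposition~\ref{sym2}. So the write-up is a two- or three-line argument; the substantive content is conceptual — recognizing that passing from $\operatorname{Sym}^2 V$ to $V^{\otimes 2}$ is precisely the passage from undirected to directed graphs.
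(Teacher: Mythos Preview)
Your proposal is correct and follows exactly the approach the paper intends: the paper states that Proposition~\ref{V2} ``is proved similarly'' to Proposition~\ref{sym2}, and you have faithfully written out that same two-containment argument, with the only change being the passage from the unordered basis of $\operatorname{Sym}^2 V$ to the ordered basis of $V^{\otimes 2}$. There is nothing to add.
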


Note that we can recover the automorphism groups of undirected graphs by labeling the basis elements of $V^{\otimes 2}$ by simply labeling $(x_j, x_i)$ with the same label as $(x_i, x_j)$.

\begin{example}
We can obtain the automorphism group of $D_4$ through partitioning $\operatorname{Sym}^2V$ in the following way: label $(1,2), (2,3), (3,4), (4,1)$ with the label $1$ and all other basis elements with $2$.  We can also obtain it through partitioning $X^2$ in the following way: label $(1,2), (2,1), (2,3), (3,2), (3,4),$ $(4,3), (4,1), (1,4)$ with the label $1$ and all other elements with $2$. 
\end{example}

\begin{example}
We can obtain the cyclic group generated by $(1234)$ by partitioning $X^2$ in in the following way: label $(1,2) (2,3), (3,4), (4,1)$ with the label $1$ and all other elements with $2$.  This cannot be obtained as the automorphism group of an undirected graph.
\end{example}

\begin{proposition}
\label{char}
Let $G$ be a simple graph such that $\operatorname{Aut}(G)$ is 2-transitive.  Then $G$ is either $K_n$ for some $n$ or a graph with no edges.
\end{proposition}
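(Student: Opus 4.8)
The plan is to use the single fact that automorphisms preserve the adjacency relation, together with the observation that $2$-transitivity collapses the set of unordered pairs of distinct vertices into one orbit. Concretely, I would phrase it as follows: the function on $\operatorname{Sym}^2 V$ defined in Proposition~\ref{sym2} (the edge indicator on basis elements $x_i\otimes x_j$ with $i\neq j$) is by construction constant on the orbits of $\operatorname{Aut}(G)$ acting on these off-diagonal basis elements. Since $\operatorname{Aut}(G)$ is $2$-transitive on $V$, it acts transitively on ordered pairs of distinct vertices, hence transitively on the off-diagonal basis elements of $\operatorname{Sym}^2 V$; so there is a single such orbit and the edge indicator is a constant. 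If that constant is $1$ then every pair of distinct vertices is joined by an edge and $G=K_n$; if it is $0$ then $G$ has no edges.

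Alternatively, and perhaps more transparently, I would argue by contradiction directly: suppose $G$ is neither complete nor edgeless. Then there exist distinct vertices $u,v$ with $\{u,v\}\in E(G)$ and distinct vertices $a,b$ with $\{a,b\}\notin E(G)$. By $2$-transitivity there is $\sigma\in\operatorname{Aut}(G)$ with $\sigma(u)=a$ and $\sigma(v)=b$. But then $\{a,b\}=\{\sigma(u),\sigma(v)\}$ must be an edge because $\sigma$ is an automorphism and $\{u,v\}$ is an edge, contradicting the choice of $a,b$. Hence $G$ is complete or edgeless.

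I expect there to be essentially no substantive obstacle here; the only points requiring a word of care are degenerate small cases (for $n\le 1$ the statement is vacuous or trivial, and for $n=2$ both $K_2$ and its complement occur, consistent with the claim), and making explicit the convention that ``$2$-transitive'' refers to the action on the vertex set $V$ with $|V|\ge 2$. I would state the contradiction argument as the main proof, since it needs no vector-space language, and optionally remark on the $\operatorname{Sym}^2 V$ reformulation to tie the result back into the framework of this section.
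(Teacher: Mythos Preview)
Your proposal is correct and matches the paper's own proof essentially verbatim: the paper argues that if $G$ has an edge $\{v_1,v_2\}$, then $2$-transitivity carries it to every other pair, forcing $G=K_n$, which is exactly your contradiction argument (and your $\operatorname{Sym}^2 V$ reformulation just rephrases this in the section's language). There is nothing to add.
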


\begin{proof}
If $G$ has any edge between two vertices $v_1$ and $v_2$, then by 2-transitivity there is some element of $\operatorname{Aut}(G)$ that sends it to any other pair of vertices.  Thus any pair of vertices must have an edge connecting them, so if $G$ has any edges it is $K_n$.
\end{proof}

\subsection{Automorphism groups of hypergraphs} 
By considering partitions of $X^k$, we can obtain more subgroups.  In particular, these subgroups can naturally be realized as automorphism groups of hypergraphs.

\begin{theorem}
\label{part}
We can obtain any subgroup of $S_n$ through a partition of $X^{n-1}$.
\end{theorem}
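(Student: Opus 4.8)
The plan is to produce, for an arbitrary subgroup $H \le S_n$, an explicit partition of the set $X^{n-1}$ of ordered $(n-1)$-tuples whose stabilizer inside $S_n$ is exactly $H$. The whole argument rests on one structural observation: $S_n$ acts on the set of \emph{injective} $(n-1)$-tuples as a copy of its own left regular representation, which turns the problem into a Cayley-style coset bookkeeping exercise.

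First I would isolate the set $T \subseteq X^{n-1}$ of tuples with pairwise distinct entries and check that $S_n$ acts on it regularly. An injective $(n-1)$-tuple is an ordered list of $n-1$ of the $n$ elements of $X$, so $|T| = n(n-1)\cdots 2 = n! = |S_n|$; and if $\sigma \in S_n$ fixes a tuple in $T$ it fixes $n-1$ of the $n$ points, hence fixes the remaining one, so $\sigma = \mathrm{id}$. Thus the action of $S_n$ on $T$ is free, and being free on a set of size $|S_n|$ it is also transitive. Fixing a base point $t_0 = (x_1, x_2, \ldots, x_{n-1})$ then gives a bijection $S_n \to T$, $\sigma \mapsto \sigma \cdot t_0$, that intertwines the $S_n$-action on $T$ with left multiplication on $S_n$.

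Then I would write down the partition. Take the blocks to be: (a) for each right coset $Hg$ of $H$ in $S_n$, the set $\{\sigma \cdot t_0 : \sigma \in Hg\} \subseteq T$; and (b) the single block $X^{n-1} \setminus T$ of all non-injective tuples (an $S_n$-invariant set, since permutations preserve distinctness of entries). Since the right cosets partition $S_n$, this is a genuine partition of $X^{n-1}$. A permutation $\tau \in S_n$ preserves it (fixes every block setwise, which is the notion of ``preserving a partition'' used throughout the paper) iff it fixes each right-coset block; under the torsor identification the block of $Hg$ is the subset $Hg \subseteq S_n$, which $\tau$ carries to $\tau H g$, so taking $g = e$ we need $\tau H = H$, i.e.\ $\tau \in H$, and conversely every $\tau \in H$ fixes all of these blocks. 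Hence the stabilizer of this partition in $S_n$ is precisely $H$.

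I do not expect a serious obstacle; the only point demanding care is the left-versus-right bookkeeping. Because $S_n$ acts on the torsor $T$ by \emph{left} multiplication, the blocks must be \emph{right} cosets $Hg$ in order for the stabilizer to be $H$ and not, say, $N_{S_n}(H)$ or a conjugate of $H$. It is also worth remarking that the construction goes through verbatim for every $n \ge 1$ (for $n \le 2$ there simply are no non-injective tuples, and block (b) is empty), and that $n-1$ is exactly the tuple length at which the free, transitive action on injective tuples becomes available.
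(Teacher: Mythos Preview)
Your proof is correct and rests on the same key observation as the paper's: the action of $S_n$ on the injective $(n-1)$-tuples is free (hence regular), so the stabilizer of the base point $t_0=(x_1,\ldots,x_{n-1})$ is trivial and its $H$-orbit recovers $H$. The only difference is cosmetic: the paper colors $H\cdot t_0$ with one label and \emph{everything else} with a second, using just two colors rather than your $[S_n:H]+1$, but the argument that the label-preserving permutations form exactly $H$ is the same freeness computation you wrote down.
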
 

\begin{proof}
Take any subgroup $H < S_n$.  Let $x = (x_1, x_2, \cdots, x_{n-1})$ and label every element of $Hx$ with 1.  Label every other element of $X^{n-1}$ with 2.  We claim that this labeling gives $H$.  It is clear that every element of $H$ preserves this labeling, because the elements labelled 1 form an orbit of $X$.  Thus it suffices to show that if $g$ preserves the labels, then $g\in H$.  Note that if $g(x)$ is labelled $1$, then it must be equal to $h(x)$ for some $h\in H$.  Then $g$ and $h$ must agree on where they send $x_1$, $x_2$, $\cdots$, $x_{n-1}$, and thus must agree on where they send $x_n$ too.  But this information completely determines the element $h$, so $g=h$ and thus $g\in H$, as desired.
\end{proof}

\begin{example}
$A_4$ can be realized with the following partition: 

$1: (1,2,3), (1,3,4), (1,4,2), (2,1,4), (2,3,1), (2,4,3), (3,1,2), (3,2,4), (4,1,3), (4,2,1), (4,3,2)$ and $2$ for everything else.
\end{example}

Motivated by Theorem~\ref{part}, we give the following definition.
\begin{definition}
[density] Let the density of a subgroup $G\le S_n$ be the least $k$ for which it can be realized as the elements of $S_n$ that preserve some partition of $X^k$.
\end{definition}

By Theorem~\ref{part}, we know that every subgroup $G$ of $S_n$ has density at most $n-1$.

\section{Distinguishing polynomial and distinguishing symmetric function}
In this section, we generalize the notion of distinguishing number to polynomials and symmetric function much like the chromatic number.  This leads to interesting questions described in the following section.

\begin{definition}
Given $G$ acting on $X$, let $f_X(n)$ be the number of distinguishing labelings of $X$ from the set $\{1, 2, \ldots, n\}$.  Then this function actually defines a polynomial $f_{G, X}(x)$, which we define to be the distinguishing polynomial of the group action of $G$ on $X$.
\end{definition}

\begin{theorem}
$f_{G, X}(x)$ is a polynomial of degree $n$.

\begin{proof}
Let $a_i$ be the number of distinguishing labelings of $X$ using precisely $i$ colors, for $1\le i\le n$.  Then $f_{G, X}(x)=\sum_{i=1}^na_i\binom{x}{i}$.
\end{proof}
\end{theorem}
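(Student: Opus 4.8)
The plan is to adapt the standard argument showing that the chromatic counting function of a graph is a polynomial, with ``proper colorings'' replaced by ``distinguishing labelings'', by stratifying labelings according to how many colors they actually use. Write $n=|X|$ (so the symbol $n$ inside $f_X(n)$ in the definition is really a dummy for the number of available colors, which I will rename $k$). The first thing to record is the key structural observation: whether a labeling $\phi\colon X\to\{1,\dots,k\}$ is distinguishing depends only on the unordered partition of $X$ into color classes $\phi^{-1}(c)$. Indeed $g\in G$ preserves the labels of $\phi$ iff $g$ fixes each block of this partition setwise, and this condition is unchanged when $\phi$ is composed with any permutation of the color set. Consequently, for $1\le i\le n$ the number $a_i$ of surjective distinguishing labelings $X\to\{1,\dots,i\}$ is well defined (it equals $i!$ times the number of distinguishing set partitions of $X$ into exactly $i$ blocks), and $a_i=0$ for $i>n$ since a surjection $X\to\{1,\dots,i\}$ forces $i\le n=|X|$.

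Next I would sort the $f_X(k)$ distinguishing labelings $X\to\{1,\dots,k\}$ by the subset $S\subseteq\{1,\dots,k\}$ of colors they actually use. By the color-relabeling invariance just noted, for each $i$-element subset $S$ the number of distinguishing labelings with image exactly $S$ equals $a_i$, so
\[
f_X(k)\;=\;\sum_{i=1}^{n}\binom{k}{i}\,a_i
\]
for every positive integer $k$ (the sum may stop at $n$ because $a_i=0$ beyond that, and $\binom{k}{i}=0$ when $k<i$, so the identity holds for all $k\ge 1$). I would then take this as the \emph{definition} of $f_{G,X}(x):=\sum_{i=1}^{n}a_i\binom{x}{i}$: it is a polynomial in $x$, it agrees with the integer counting function $f_X$ at every positive integer, and since two polynomials agreeing at infinitely many points coincide, it is the unique such polynomial, which legitimizes the definition in the excerpt. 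Each $\binom{x}{i}$ has degree $i$, so $\deg f_{G,X}\le n$, with equality exactly when the leading coefficient $a_n/n!$ is nonzero.

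Finally I would verify $a_n\neq 0$. A surjective labeling $X\to\{1,\dots,n\}$ with $|X|=n$ is a bijection, and any bijective labeling is automatically distinguishing: if $g\in G$ satisfies $\phi(g(x))=\phi(x)$ for all $x$, injectivity of $\phi$ forces $g(x)=x$ for all $x$, hence $g=e$. Thus $a_n=n!$ and $\deg f_{G,X}=n$ exactly. I do not expect a genuine obstacle in this argument; the one place that needs care — and the place everything hinges on — is the opening observation that ``being distinguishing'' is a property of the color-class partition and not of the labeling itself, since that is what makes the stratification by number of colors legitimate and, via the all-distinct labeling, forces the top coefficient to be nonzero.
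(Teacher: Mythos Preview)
Your argument is correct and follows the same stratification-by-number-of-colors approach as the paper, arriving at the identical formula $f_{G,X}(x)=\sum_{i=1}^n a_i\binom{x}{i}$. In fact you are more thorough: the paper's proof stops at the formula and does not explicitly verify $a_n\neq 0$, whereas you check that every bijective labeling is distinguishing so that $a_n=n!$ and the degree is exactly $n$ rather than merely at most $n$.
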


As with the chromatic number, the distinguishing number is the smallest positive integer that is not a root of the distinguishing polynomial.

\begin{definition}
[DSF] Define the distinguishing symmetric function (DSF) of $G$ acting on $X$ as 
\[
Y_{G, X} = \sum_{(i_1, i_2, \ldots, i_n) \text{ is a distinguishing labeling for } X} x_{i_1}x_{i_2}\cdots x_{i_n}.
\]
\end{definition}

Clearly we can recover the distinguishing polynomial from the DSF by setting appropriate variables to 0 and 1.

\section{Further directions}
The techniques introduced in this paper lead to many new directions of research.  In this section we give an overview of these possibilities and state several questions and conjectures.  We begin with topics contained in the various sections of this paper and then discuss a possible applications to separate research on the distinguishing number. 

In Section~\ref{sn}, we proved that $S_n$ can act faithfully on a set $X$ with distinguishing number $n-1$ for arbitrarily large $n$.  Our construction made use of the fact that $A_n$ is a normal subgroup of $S_n$, so we could let $A_n$ be the stabilizer of a separate orbit alongside the $n$-element orbit on which $S_n$ acts naturally.  Furthermore, we showed that if $S_n$ acts with distinguishing number $n-1$, then it must act naturally on a set of $n$ elements.  This motivates the following problem.

\subsection{Distinguishing numbers for \texorpdfstring{$S_n$}{S_n}}
\begin{question}
Classify all actions of $S_n$ with distinguishing number $n-1$.
\end{question}

Furthermore, note that for $n>4$ the only non-trivial normal subgroup of $S_n$ is $A_n$, and that the bounds that we used to show the existence of a pseudoclique with $n$ elements are not tight.  This motivates the following conjecture.

\begin{conjecture}
There exists some $N$ such that for all $n>N$, $S_n$ cannot act with distinguishing number $n-2$.
\end{conjecture}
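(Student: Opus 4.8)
The plan is to adapt, one notch further, the analysis behind Theorem~\ref{nset}. Suppose $S_n$ acts faithfully on $X$ with $D_{S_n}(X)=n-2$. Exactly as in that proof, Lemma~\ref{l1} forces some orbit $O$ to require at least $n-2$ labels under the induced action of $S_n/\operatorname{Stab}(O)$, and Tymoczko's bound (a group of order at most $k!$ has distinguishing number at most $k$) forces $O$ to require at most $n$ labels; so $O$ needs exactly $n-2$, $n-1$ or $n$. Running the depth-first variant of Algorithm~\ref{alg} on $O$ then produces, for every run, a strictly decreasing sequence $a_1 > a_2 > \cdots > a_m = 1$ with $m \in \{n-2,n-1,n\}$, and by Fact~\ref{os} the product $a_1\cdots a_{m-1}$ equals $|S_n/\operatorname{Stab}(O)|$ and hence divides $n!$. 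By Lemma~\ref{stab1}, $\operatorname{Stab}(O)$ is normal, so for $n>4$ it is $\{e\}$, $A_n$ or $S_n$; it cannot be $S_n$ (the action on $O$ is non-trivial) and it cannot be $A_n$ (that would make $|O|\le 2$, so $O$ would need only two labels); hence $\operatorname{Stab}(O)=\{e\}$ and $a_1\cdots a_{m-1}=n!$.

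The first main step is to enumerate the possibilities for $(a_i)$ and extract the structure they force. A product of $m-1$ distinct integers $\ge 2$ equal to $n!$, with $m\le n$, is rigid: when $m=n$ the sequence must be $n,n-1,\dots,2,1$, and Lemma~\ref{l4} then makes $O$ a natural $S_n$-orbit on $n$ points; when $m=n-1$ or $m=n-2$ the sequence has one or two ``jumps,'' and Lemma~\ref{l3} should pin the sizes and positions of these jumps down to finitely many options, after which Lemma~\ref{l4} still yields a pseudoclique of size $n-2$ (resp.\ $n-3$) inside $O$, forcing $O$ to be a transitive faithful $S_n$-action of degree $\Theta(n^2)$ (resp.\ $\Theta(n^3)$). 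Such a large-degree transitive action has distinguishing number bounded by an absolute constant (a counting argument on the number of colour-preserving symmetries, exactly as for edge-colorings of $K_n$), which contradicts $D_{S_n}(O)\ge n-2$ once $n$ exceeds that constant. Carrying out the jump enumeration rigorously, and ruling out the ``$A_n$ acting on $n$ points'' sub-configuration (whose pointwise stabilizer would be a normal subgroup of $S_n$ of order $2$, impossible for $n>4$), is where the slack left in Theorem~\ref{nset} must be spent; converting the non-tight bounds of that proof into a genuinely complete case list is the step I expect to be the main obstacle.

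Everything above reduces the problem to the case where $X$ contains a natural $n$-point orbit $Y$, and here the argument is clean. For any labeling of $X$ with $n-2$ colors, the subgroup $R\le S_n$ fixing the colors on $Y$ is the row subgroup of a partition of $n$ with at most two nontrivial blocks, so $R=S_3$ or $R=S_2\times S_2$; in particular $R$ contains a non-identity even permutation. If every orbit of $X$ other than $Y$ had stabilizer $A_n$ or $S_n$, that even permutation would fix all those orbits pointwise and fix the colors on $Y$, contradicting that the labeling is distinguishing; hence $X$ has an orbit $O'\neq Y$ on which $S_n$ acts faithfully. Now switch to $n-3$ colors on $Y$: the row subgroup $R'$ is $S_4$, $S_3\times S_2$ or $S_2^3$, of order at most $24\le 4!$, and since $R'$ acts faithfully on the faithful orbit $O'$, Tymoczko's bound gives a labeling of $O'$ with only four colors on which no non-identity element of $R'$ is color-preserving; as $4\le n-3$ for $n\ge 7$, combining the $(n-3)$-color labeling of $Y$ with this four-color labeling of $O'$ and the constant label elsewhere yields a distinguishing labeling of $X$ with $n-3$ colors, so $D_{S_n}(X)\le n-3$, a contradiction. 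The only point requiring care is when the task of killing $R'$ must be distributed across several faithful orbits at once rather than carried by a single $O'$; the hypothesis $n>N$ is what keeps these auxiliary orbits few and their stabilizers normal, so that the inequality $4\le n-3$ always wins.
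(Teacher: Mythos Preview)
The statement is listed in the paper as an open \emph{conjecture}, not a theorem; the paper offers no proof to compare against. What you have written is explicitly a plan of attack rather than a proof, and you correctly flag the first half as the main obstacle.

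The second half of your sketch --- the case where $X$ contains a natural $n$-point orbit $Y$ --- is essentially correct and is the clean part. One small slip: it is not true that the row subgroup $R$ of an arbitrary $(n-2)$-labeling restricted to $Y$ must be $S_3$ or $S_2\times S_2$, since the labeling need not use all $n-2$ colors on $Y$; but the conclusion you actually need, that $R$ contains a nontrivial even permutation, follows anyway by pigeonhole (either three elements of $Y$ share a color or two disjoint pairs do). After that, the dichotomy ``no other faithful orbit $\Rightarrow D\ge n-1$'' versus ``some other faithful orbit $\Rightarrow D\le n-3$'' goes through exactly as you wrote, and a single faithful $O'$ already suffices --- the worry in your last sentence is unnecessary, since any $g$ preserving all labels lies in $R'$ by its action on $Y$ and is then killed by the labeling of $O'$ alone.

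The first half is where the genuine gap lies, and it is more than a bookkeeping exercise with Lemma~\ref{l3}. What you would actually need is that every transitive faithful $S_n$-action of degree strictly greater than $n$ has distinguishing number at most $n-3$ for large $n$; equivalently, that among transitive faithful $S_n$-sets only the natural one can have distinguishing number $\ge n-2$. The sequence $(a_i)$ by itself does not pin down the action: knowing there is a pseudoclique of size $n-2$ or $n-3$ inside $O$ does not force $|O|$ to be $\Theta(n^2)$ or $\Theta(n^3)$, and even granting that, the blanket claim that ``large-degree transitive actions have bounded distinguishing number'' requires an argument valid for arbitrary point stabilizers, not just the $2$-subset case your edge-coloring analogy covers. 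A rigorous reduction would more plausibly go through the classification of subgroups of $S_n$ of small index (so that the possible point stabilizers, hence the possible $O$, are enumerated) than through the orbit-size sequence of Algorithm~\ref{alg}. As it stands, the first half is a heuristic rather than a reduction, so the proposal does not settle the conjecture.
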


This can be strengthened to the following conjecture.

\begin{conjecture}
For every positive integer $k>1$, there exists some $N$ such that for  all $n>N$, $S_n$ cannot act with distinguishing number $n-k$.
\end{conjecture}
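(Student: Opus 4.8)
The plan is to deduce the conjecture from a structural classification of faithful \emph{transitive} actions of $S_n$ with large distinguishing number, together with an elementary counting argument based on the fact that for $n>4$ the only nontrivial normal subgroup of $S_n$ is $A_n$. Fix $k\ge 2$ and suppose toward a contradiction that $n$ is large and $S_n$ acts faithfully on $X$ with $D_{S_n}(X)=n-k$.

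First I would record a monotonicity principle refining Lemma~\ref{l1}: if $Y\subseteq X$ is a union of $S_n$-orbits on which $S_n$ still acts faithfully, then $D_{S_n}(X)\le D_{S_n}(Y)$, since one may extend a distinguishing labeling of $Y$ by giving every element of $X\setminus Y$ one fresh color. Consequently every faithful orbit-union inside $X$ — in particular every single orbit $O$ with $\operatorname{Stab}(O)=\{e\}$ — has distinguishing number at least $n-k$. The heart of the argument is then the following claim, to be proved by extending the method of Theorem~\ref{nset}: \emph{for $n>N(k)$, a faithful transitive action of $S_n$ with distinguishing number $\ge n-k$ is the natural action on $n$ points.} To prove it I would run the depth-first variant of Algorithm~\ref{alg} on such an orbit $O$, obtaining a strictly decreasing sequence of positive integers $a_1>a_2>\cdots>a_t=1$ with $a_1=|O|$, with at least $n-k$ labels used (so $t\ge n-k$), and with $\prod_i a_i$ dividing $n!$. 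Distinctness gives $a_1\ge t\ge n-k$, so the sequence is already nearly a block of consecutive integers; the total deficiency $a_1-t=\sum_i(a_i-a_{i+1}-1)$ must be bounded by some explicit $f(k)$, because any larger gap forces $a_1$ up and makes $\prod_i a_i$ exceed $n!$ — this is a $k$-uniform sharpening of Lemma~\ref{l3}. Applying Lemma~\ref{l4} to the longest consecutive block then yields a pseudoclique of size $s\ge n-f(k)$, i.e. a subset of $O$ carrying the full symmetric group $S_s$; since $S_n$ has no subgroup of index strictly between $2$ and $n$ and, for $n\ne 6$, exactly one of index $n$, this forces $|O|=n$ and $O$ to be the natural $[n]$.

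Granting the claim, the conjecture follows quickly. By the claim every faithful orbit of $X$ is a copy of the natural action, so $X=[n]_1\sqcup\cdots\sqcup[n]_m\sqcup Z$, where every orbit in $Z$ has nontrivial pointwise stabilizer (hence, by Lemma~\ref{stab1} and the normal-subgroup structure of $S_n$, stabilizer $A_n$ or $S_n$), and $m\ge 1$ because faithfulness of the whole action requires at least one faithful orbit. If $m\ge 2$, then $[n]_1\sqcup\cdots\sqcup[n]_m$ is a faithful orbit-union realizing the diagonal action of $S_n$ on $m$ natural copies, whose distinguishing number is $\ceil{\sqrt[m]{n}}$ by Theorem~\ref{mul}; monotonicity then gives $n-k\le\ceil{\sqrt[m]{n}}\le\ceil{\sqrt{n}}$, impossible for $n$ large. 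So $m=1$ and $X=[n]\sqcup Z$ with $A_n$ acting trivially on $Z$. Take an optimal labeling $\phi$: it uses some number $c\le n-k\le n-2$ of colors on $[n]$, so the group $B\le S_n$ of permutations preserving $\phi|_{[n]}$ is a Young subgroup whose color classes have total excess $n-c\ge k\ge 2$; such a $B$ contains a nonidentity \emph{even} permutation (a $3$-cycle if some class has size $\ge3$, or a product of two disjoint transpositions if two classes have size $2$). But an even permutation lies in $A_n$, hence fixes $Z$ pointwise, hence preserves all of $\phi$ — contradicting that $\phi$ is distinguishing. This is exactly the step that collapses for $k=1$, where $B$ may be generated by a single transposition, which is odd and can be detected by an $A_n$-quotient orbit as in Theorem~\ref{con}; this is why the hypothesis $k>1$ is essential.

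The main obstacle is the classification claim. The casework in the proof of Theorem~\ref{nset} already required the delicate Lemma~\ref{l3}, and allowing $k$ spare labels gives the decreasing sequence $(a_i)$ genuine freedom — it may skip several values or stop short of length $n$ — so a brute-force extension is unattractive. What one really wants is a single clean statement of the form ``a decreasing sequence of at least $n-k$ distinct positive integers whose product divides $n!$ lies within $O(k)$ of an interval of integers'', together with a correspondingly robust version of the pseudoclique lemma; establishing these, and disposing of the finitely many small $n$ (including the exotic degree-$6$ transitive action of $S_6$) inside the threshold $N(k)$, is where the real work lies. Everything else — the monotonicity principle, the invocation of Theorem~\ref{mul}, and the even-permutation count — is routine.
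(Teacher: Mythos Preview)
This statement is posed in the paper as an open \emph{conjecture}; the paper gives no proof, only the motivation that the construction in Theorem~\ref{con} exploits the normal subgroup $A_n$ and that the bounds in the proof of Theorem~\ref{nset} are not tight. There is therefore no argument in the paper to compare your proposal against.

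On its own merits your outline is coherent and isolates the crux correctly. The reduction from the general action to a transitive classification is sound: the monotonicity $D_{S_n}(X)\le D_{S_n}(Y)$ for a faithful orbit-union $Y\subseteq X$ is Lemma~\ref{l1}; for $n>4$ the normal-subgroup structure of $S_n$ does force every nonfaithful orbit to have pointwise stabilizer $A_n$ or $S_n$; the appeal to Theorem~\ref{mul} to dispose of $m\ge 2$ natural copies is legitimate once one notes (as you implicitly do) that for $n\ne 6$ all index-$n$ subgroups of $S_n$ are conjugate, so the copies really are diagonal; and the even-permutation endgame for $m=1$ is clean and correctly explains why $k=1$ is exceptional. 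You are also candid that the transitive classification claim --- that a faithful transitive $S_n$-action with distinguishing number at least $n-k$ must be the natural one for $n$ large --- is where the real work lies, and that Lemma~\ref{l3} and Lemma~\ref{l4} do not obviously survive the passage from one gap to $k$ gaps in the sequence $(a_i)$. That honestly-flagged step is genuinely the open part of the conjecture; the paper does not claim to know how to carry it out either. So your proposal should be read as a plausible roadmap toward the conjecture rather than a proof, with the divisibility-and-gap analysis of the orbit-size sequence as the piece that remains to be supplied.
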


\subsection{Multiple labelings and obtaining subgroups}
In Section~\ref{gen1}, we considered the action of $\Gamma$ on $X^k$, given some action of $\Gamma$ on $X$.  We showed that this is equivalent to labeling each element of $X$ with a $k$-tuple of labels rather than just a single label.  In the case of $S_n$ acting on an $n$-element set via all possible permutations, we showed in Theorem~\ref{mul} that the distinguishing number is $\ceil{\sqrt[k]{n}}$.  We posit that a similar result might hold for general actions with the following question.

\begin{question}
Let $\Gamma$ act on $X$ with distinguishing number $r$.  Then does $\Gamma$ acts on $X^k$ with distinguishing number at most $\ceil{\sqrt[k]{r}}$?  When does equality hold?
\end{question}

We also considered the question of which subgroups of group actions could be obtained from labelings as the intersection of the group action and the subgroup of the symmetric group on the elements of the set that preserves the labels.  We showed in Theorem~\ref{abe} that all subgroups of an abelian group could be obtained.  We conjecture that this is in fact a necessary condition.  

\begin{conjecture}
If all subgroups of $\Gamma$ can be obtained through a labeling, then $\Gamma$ is abelian. 
\end{conjecture}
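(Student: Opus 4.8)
The plan is to prove the contrapositive: if $\Gamma$ acts faithfully on $X$ and $\Gamma$ is non-abelian, then some subgroup $H\le\Gamma$ is obtained by no labeling. The first step is to characterize the obtainable subgroups. A labeling with colour classes $A_1,\dots,A_r$ is preserved by $\Gamma\cap P_\lambda=\bigcap_i\operatorname{Stab}_\Gamma(A_i)$, and $H\le\operatorname{Stab}_\Gamma(A)$ precisely when $A$ is a union of $H$-orbits on $X$; conversely, the common refinement of finitely many $H$-invariant subsets is a labeling whose stabilizer is the corresponding intersection of setwise stabilizers. Since $\Gamma$ is finite, it follows that $H$ is obtainable by a labeling if and only if $H=\overline{H}$, where
\[
\overline{H} \;:=\; \bigcap_{O \in X/H} \operatorname{Stab}_\Gamma(O)
\]
is the intersection, over the $H$-orbits $O$ on $X$, of their setwise stabilizers in $\Gamma$. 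Since $H\le\overline{H}$ always, the obstruction to obtaining $H$ is exactly an element $g\in\Gamma\setminus H$ fixing every $H$-orbit setwise, and the conjecture becomes: a non-abelian faithful action admits some $H$ with $\overline{H}\supsetneq H$.

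With this characterization, I would look for the bad subgroup $H$ among the following candidates, noting first that point stabilizers are never bad, since $\{x\}$ is a $\Gamma_x$-orbit and so $\overline{\Gamma_x}\le\operatorname{Stab}_\Gamma(\{x\})=\Gamma_x$. The more promising candidates are (i) a non-normal subgroup $H$, hoping that conjugacy forces some $g\notin H$ to preserve every $H$-orbit; (ii) a cyclic subgroup $\langle a\rangle$ together with an element $b$ not commuting with $a$, arguing that $b$ must move some cycle of $a$ unless $b\in\langle a\rangle$; and (iii) a subgroup assembled directly from a non-commuting pair. Lemma~\ref{l1} reduces everything to a single orbit, and on a transitive set Fact~\ref{os} constrains, through orbit sizes, how large $\overline{H}/H$ can be; the aim throughout is to convert non-commutativity in $\Gamma$ into a union of $H$-orbits that some element outside $H$ fails to preserve.

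The step I expect to be the real obstacle --- and the reason the statement is only a conjecture --- is the class of non-abelian groups in which one fixed nontrivial normal subgroup $N$ lies in every nontrivial subgroup, the prototype being $\Gamma=Q_8$ with $N=\{\pm 1\}$. There faithfulness forces some point stabilizer to be trivial, so $X$ has a free orbit $O_0\cong\Gamma$; but on a free orbit every $H$-orbit $Hx$ satisfies $\operatorname{Stab}_\Gamma(Hx)=H$, whence $\overline{H}=H$ for all $H$ and every subgroup is obtainable --- indeed by a two-colouring with one class $Hx_0\cup(X\setminus O_0)$. So the first thing to settle is whether $Q_8$, and more generally the generalized quaternion groups, are genuine counterexamples; if so, the conjecture needs an extra hypothesis that precludes a free orbit (for instance, that every point stabilizer of the action is nontrivial, or that $\Gamma$ is centreless, or that $\Gamma$ is not Hamiltonian), after which the remaining task is to show that under such a hypothesis a non-abelian $\Gamma$ always admits a non-obtainable subgroup.
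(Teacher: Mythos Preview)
The paper does not prove this statement; it is posed as an open conjecture in the ``Further directions'' section, with only the remark that proper transitive subgroups of a transitive $\Gamma$ can never be obtained. So there is no paper proof to compare against.

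More importantly, your proposal does not prove the conjecture either --- it \emph{disproves} it, and correctly so. Your characterization of the obtainable subgroups as exactly those $H$ with $H=\overline{H}:=\bigcap_{O\in X/H}\operatorname{Stab}_\Gamma(O)$ is right, and your computation that on a free orbit every left coset $Hx$ has setwise stabilizer equal to $H$ is also right. It follows immediately that for the left regular action of \emph{any} finite group $\Gamma$ on itself, every subgroup is obtainable by a two-colouring (colour $H\subseteq\Gamma$ with $1$ and its complement with $2$). Taking $\Gamma$ non-abelian, say $\Gamma=S_3$, already refutes the conjecture as stated; you do not need the special structure of $Q_8$ for this reading.

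Where $Q_8$ genuinely matters is for the stronger reformulation you implicitly consider: ``if for \emph{every} faithful action of $\Gamma$ all subgroups are obtainable, then $\Gamma$ is abelian.'' Your argument handles this too: since every nontrivial subgroup of $Q_8$ contains the centre $\{\pm1\}$, any faithful $Q_8$-set must have a point with trivial stabilizer, hence a free orbit, hence $\overline{H}=H$ for all $H$. So $Q_8$ is a non-abelian group for which every faithful action has all subgroups obtainable. You phrase this as something ``to settle,'' but your own computation already settles it --- the conjecture is false, and the generalized quaternion groups (being precisely the non-cyclic $p$-groups with a unique subgroup of prime order) give an infinite family of counterexamples even to the strengthened version. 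The natural salvage, as you suggest, is to add a hypothesis ruling out free orbits.
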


Note for instance that if $\Gamma$ is transitive on $X$, then any non-constant labeling will remove this property.  Thus we cannot obtain proper transitive subgroups of $\Gamma$. 

Recall that the Jordan-Holder theorem states that any two composition series of a group are equivalent.  Furthermore, recall that in Lemma~\ref{stab1}, the pointwise stabilizer of an orbit is normal.  By coloring the elements of an orbit with distinct colors and everything else a constant color, we thus obtain a normal subgroup of $\Gamma$.  This leads to the following problem.

\begin{problem}
Classify all group actions for which we can obtain an entire composition series of $\Gamma$ through refining labelings.
\end{problem}

Theorem~\ref{abe} shows us that when the group is abelian, this is always possible. 

\subsection{The consumption ordering of partitions}
In Section~\ref{poset}, we studied the different distinguishing partitions of group actions.  We saw that the consumption ordering is consistent with the dominance ordering in Theorem~\ref{dom}, but that the converse does not generally hold.  However, it turns out that for $n=2,3,5,7$, the consumption ordering is equivalent to the dominance ordering.  This prompts the following conjecture.

\begin{conjecture}
For all primes $p$, the consumption ordering on partitions of $p$ is equivalent to the dominance ordering.
\end{conjecture}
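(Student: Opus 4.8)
The plan is to prove the conjecture by combining Theorem~\ref{dom} with its converse specialized to a prime. Theorem~\ref{dom} already gives one containment, $\lambda\ge_c\mu\Rightarrow\lambda\ge\mu$, so it suffices to show that when $n=p$ is prime, $\lambda\ge\mu$ in the dominance order forces $\lambda\ge_c\mu$. Unwinding the definition, this is \emph{equivalent} to the statement that for every subgroup $H\le S_p$ the set
\[
C(H)=\{\lambda\vdash p : H \text{ consumes } \lambda\}
\]
is a down-set (order ideal) in the dominance order. This reformulation already makes clear why primality is essential: for $n=4$ the normal Klein four-group $V\le S_4$ has $C(V)=\{(3,1),(2,1,1),(1^4)\}$, which omits $(2,2)<(3,1)$, so $C(V)$ is not a down-set -- exactly the content of Example~\ref{31}. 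Thus any argument must genuinely exploit "$p$ prime."

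I would split on whether $H$ is transitive. Suppose first $H$ is \emph{transitive}. Then $p\mid|H|$, so by Cauchy's theorem $H$ contains an element of order $p$, necessarily a $p$-cycle $\sigma$, and moreover $H$ is primitive since $p$ is prime. Because every nontrivial power of $\sigma$ is again a $p$-cycle, hence transitive on $[p]$, the only set partition preserved block-wise by any nontrivial power of $\sigma$ is the one-block partition $(p)$; so for every $\lambda\ne(p)$ the subgroup $\langle\sigma\rangle$ contributes no obstruction and $H\cap P_\pi$ is controlled entirely by elements outside $\langle\sigma\rangle$. Here I would invoke Burnside's classification of transitive groups of prime degree: either $H\le\operatorname{AGL}(1,p)=C_p\rtimes C_{p-1}$, or $H$ is $2$-transitive. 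In the affine case, every element $x\mapsto ax+b$ with $a\ne1$ has the rigid cycle type "one fixed point plus $(p-1)/\operatorname{ord}(a)$ cycles of equal length $\operatorname{ord}(a)$," so the set partitions it can stabilize are precisely those whose blocks are unions of its cycles (with its fixed point attached to one block); one then shows that for each dominance cover $\lambda\gtrdot\mu$ below the maximal consumed type one may recolor a single element of a distinguishing $\lambda$-type set partition and remain distinguishing, which gives the down-set property. (This is exactly the mechanism by which, e.g., $C(D_5)=\{(3,1,1),(2,2,1),(2,1,1,1),(1^5)\}$ comes out a genuine down-set.) In the $2$-transitive case, the point stabilizer $H_x$ is transitive on the remaining $p-1$ points, which rules out $(p-1,1)$ and, propagating downward, pins $C(H)$ to a short initial segment; for $H\in\{A_p,S_p\}$ this is immediate ($C(S_p)=\{(1^p)\}$, $C(A_p)=\{(2,1^{p-2}),(1^p)\}$), and the finitely many sporadic $2$-transitive groups of prime degree ($\operatorname{PSL}_2(11)$, $M_{11}$, $M_{23}$, and the $\operatorname{PSL}_d(q)$ of prime projective degree) can be handled individually or via a uniform lemma on which cycle types occur.

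Suppose instead $H$ is \emph{intransitive}. Then $H$ has an orbit $O$ with $1\le|O|<p$, and since $p$ is odd, $|O|\ne p-|O|$. Writing $[p]=O\sqcup([p]\setminus O)$ with both parts $H$-invariant, the action of $H$ embeds faithfully into $\bar H_O\times\bar H_{[p]\setminus O}$, and Lemma~\ref{l1} lets one glue distinguishing colorings of the two pieces. The intended induction is on the size of the underlying set: assuming the down-set statement for all faithful actions on sets of size $<p$, one assembles $C(H)$ from the consumed partitions of the two orbit-actions, using the inequality $|O|\ne p-|O|$ to guarantee that the extra freedom coming from a proper invariant subset of a \emph{different} size never gets destroyed -- this asymmetry is precisely what prevents the Klein-four pathology from recurring, since an interchange of two equal-size blocks is impossible.

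The hard part will be two steps. First, in the intransitive induction, gluing via Lemma~\ref{l1} produces \emph{some} distinguishing coloring but not obviously one of a prescribed block-size type, so one must carefully track the merging of color-class multisets when combining the two pieces and argue that all types below the maximal one are attainable. Second, and more fundamentally, the recoloring argument in the transitive affine (and $2$-transitive) cases -- showing that for every dominance cover below the top consumed type one can move a single box down while preserving the distinguishing property -- is the place where "$p$ prime $\Rightarrow$ primitive with a $p$-cycle" must be used in an essential, non-formal way, because the analogous local statement is false for composite $n$ (the $S_4$ example). I would expect the $2$-transitive non-alternating groups to be the main residual nuisance, requiring either a general statement about cycle types in a $2$-transitive group of prime degree or a finite case check over the known list.
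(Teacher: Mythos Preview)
The paper does not prove this statement; it appears only as an open conjecture in the ``Further directions'' section, motivated by a check for $p=2,3,5,7$. So there is no argument in the paper to compare yours against, and what you have written is a proposed attack on an open problem, not an alternative to an existing proof.

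Your reformulation---that $\lambda\ge\mu\Rightarrow\lambda\ge_c\mu$ for all $\lambda,\mu\vdash p$ is equivalent to $C(H)$ being a dominance down-set for every $H\le S_p$---is correct, and bringing in Burnside's dichotomy for transitive groups of prime degree is a natural first move. The substantive gap is in the intransitive branch. You propose to induct on the size of the underlying set, ``assuming the down-set statement for all faithful actions on sets of size $<p$''; but that inductive hypothesis is \emph{false}: Example~\ref{31} is precisely a faithful action on a $4$-set whose consumed set is not a dominance down-set, and for any prime $p\ge 7$ one can realize an intransitive $H\le S_p$ with a size-$4$ orbit on which it acts as the Klein four-group. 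So the induction as stated cannot run. What would actually be needed is a direct argument that when one glues colorings across orbits the resulting $C(H)\subseteq\{\lambda\vdash p\}$ is a down-set even though the orbit-restricted consumed sets need not be; your appeal to $|O|\ne p-|O|$ is a heuristic (it rules out one specific pathology, a global swap of two equal orbits), not a mechanism that controls partition \emph{types} under gluing, and Lemma~\ref{l1} only tracks the number of labels, not the block-size multiset.

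On the transitive side, the affine ``recolor along a dominance cover'' step is asserted rather than argued, and the $2$-transitive case is deferred to a case check over the classification of $2$-transitive groups of prime degree. Both are plausible programs, but neither is carried out; in particular, you would need a uniform statement about which set-partition types can be stabilized by elements of a given cycle structure and how that interacts with covers in the dominance lattice, and that lemma is where the actual content of the conjecture lives.
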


A related problem is to describe the cases in which $\lambda\ge \mu$ but $\lambda\not\ge_c\mu$.  The following dual conjecture may be solvable through clever constructions:

\begin{conjecture}
If $n$ is not prime, then the consumption ordering on partitions of $n$ is not equivalent to the dominance ordering.
\end{conjecture}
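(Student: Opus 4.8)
The plan is to exhibit, for each composite $n$, a specific pair of partitions $\lambda > \mu$ (dominance) together with a subgroup $H < S_n$ witnessing $\lambda \not\ge_c \mu$, i.e.\ $H$ consumes $\lambda$ but not $\mu$. The guiding intuition comes from Example~\ref{31}: there the normal Klein four-group inside $S_4$ consumes $(3,1)$ (every point-stabilizer is trivial) but fails to consume $(2,2)$ because swapping the two blocks of size $2$ always lands inside the group. For general composite $n = ab$ with $a, b \ge 2$, the natural candidate is an imprimitive transitive subgroup: take $H$ to be (a subgroup of) the wreath product $S_b \wr S_a$ acting on $n = ab$ points arranged in $a$ blocks of size $b$, or an abelian variant such as $\mathbb{Z}/b \wr \mathbb{Z}/a$ or even just a large elementary abelian or cyclic subgroup that still acts transitively. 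The key is to choose $H$ so that it consumes some partition $\lambda$ far up the dominance order (few, unequal parts, so that distinguishing is "easy" for a group whose point stabilizers are small) while being obstructed from consuming a partition $\mu$ with $\lambda > \mu$ in dominance, for the block-symmetry reason above.

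The main steps I would carry out are as follows. First, fix the factorization $n = ab$ with $a \ge 2$ the smallest prime factor of $n$, and describe $H$ precisely as the subgroup of $S_n$ permuting $a$ blocks $B_1, \dots, B_a$ of size $b$ cyclically (or by all of $S_a$) and acting within each block by a cyclic group $\mathbb{Z}/b$; this $H$ is transitive and has order dividing $b^a \cdot a!$. Second, I would identify a partition $\lambda$ that $H$ consumes: since $H$ is not too large, Tymoczko's bound (the theorem reproduced in Section~\ref{sn}) or an explicit labeling gives a distinguishing set partition, and one records its type $\lambda$; the point is to arrange the labels so that $\lambda$ has a long first part or otherwise sits high in dominance — concretely, label one block nearly constantly and spread distinct labels across a transversal. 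Third, and this is the crux, I would pick $\mu$ with $\lambda \rhd \mu$ in dominance but such that $\mu$ is "block-balanced" relative to the imprimitivity structure — e.g.\ $\mu = (b^a)$ or a refinement respecting blocks — and show that \emph{any} set partition of type $\mu$ admits a nontrivial block-permuting element of $H$ preserving it, by a counting/pigeonhole argument: if $\mu$ has at most $a-1$ parts larger than some threshold, the $a$ blocks cannot all be "separated" by the partition, so a block transposition or cyclic shift in $H$ survives. Fourth, I would double check that $\lambda \ne \mu$ and that the chosen $\lambda, \mu$ are genuinely comparable in dominance, so that this really certifies inequivalence of the two orderings on partitions of $n$.

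The main obstacle I expect is the third step: constructing $\mu$ strictly below $\lambda$ in dominance that $H$ provably \emph{cannot} consume. It is easy to produce partitions $H$ fails to consume — anything too short, by Tymoczko-type lower bounds, or anything forcing block symmetry — but one must simultaneously keep $\mu$ below the chosen $\lambda$ in dominance, and dominance is a rather coarse constraint that can fight against the combinatorial obstruction. I anticipate having to tune $a, b$ and possibly enlarge or shrink $H$ (for instance replacing $S_a$ on the blocks by $\mathbb{Z}/a$, or adding the full $S_b$ within blocks) to make the dominance inequality and the non-consumption coexist; the small case $n = 4$ already shows the margins are tight, and the general argument will likely split into the cases "$n$ has a square factor" and "$n$ is squarefree composite" (and perhaps a special treatment of $n = 2p$). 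A secondary technical point is making the obstruction argument uniform: rather than case analysis on $\mu$, I would phrase it as "if $H \cap P_\psi = \{e\}$ for a labeling $\psi$ of type $\mu$, then the induced action of $H$ on blocks is faithfully recorded by $\psi$, forcing $\mu$ to dominate the partition $\lambda$ we built," contradicting $\lambda \rhd \mu$ — which turns the problem back into a transpose/dominance comparison exactly as in the proof of Theorem~\ref{dom}.
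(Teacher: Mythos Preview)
This statement is listed in the paper's ``Further directions'' section as an open \emph{conjecture}; the paper gives no proof, only the evidence of Example~\ref{31} for $n=4$ and the computed Hasse diagrams for $n=4,6$. So there is no argument in the paper to compare yours against---you are attempting to resolve a problem the paper leaves open.

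Your outline is the natural line of attack, and you have correctly isolated step~3 as the crux; but the pieces you sketch do not close the gap. First, the mechanism behind Example~\ref{31} is a coincidence specific to $(\mathbb{Z}/2)^2$: every $2$-subset is a coset of an order-$2$ subgroup, so every $(2,2)$ set partition is automatically preserved by a nontrivial element. For the regular action of $(\mathbb{Z}/p)^2$ with $p\ge 3$ this fails badly (most $p$-subsets are not cosets, and one checks the group does consume $(p,\dots,p)$), and the regular cyclic $\mathbb{Z}/n$ consumes every partition $\lambda\neq(n)$; so the ``small regular $H$ plus pigeonhole'' template does not export beyond powers of $2$ without a new idea. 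Second, passing instead to the full wreath product $S_b\wr S_a$ forces any $\lambda$ it consumes to have at least $b$ parts, which drives $\lambda$ \emph{down} in dominance; already for $n=4$ the group $S_2\wr S_2\cong D_4$ fails to consume either $(3,1)$ or $(2,2)$, so it is not the witness there. Third, your closing suggestion of recycling the transpose mechanism from Theorem~\ref{dom} cannot work as written: for the column Young subgroup $S_{\lambda^\intercal}$ used in that proof, a Gale--Ryser argument shows it consumes $\mu$ \emph{exactly} when $\lambda\ge\mu$ in dominance, so those groups realize the dominance down-set on the nose and can never witness a strict discrepancy between the two orders. A genuinely different family of witnesses is required for general composite $n$, and the proposal does not yet supply one.
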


We also described a simple combinatorial principle that can be used to find some ordering relations in the consumption order.  It would be interesting if this could be made complete.

\begin{question}
Is there a combinatorial rule that can be applied to partitions that tells if one partition is greater than the other under the consumption ordering?
\end{question}

In order to specialize this to the case of automorphism groups of graphs, we need to make the corresponding adjustment to the definition of the consumption ordering.  Namely, we only consider subgroups of $S_n$ realizable as automorphisms of graphs on $n$ vertices in our definition of consumption.  We would like to describe the resulting poset.  For example, does the following result hold?

\begin{conjecture}
Considering only subgroups of $S_n$ realizable as the automorphism group of a graph, do we have $\lambda\ge_c\mu\Rightarrow \lambda\ge\mu$?
\end{conjecture}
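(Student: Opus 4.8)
The natural plan is to adapt the proof of Theorem~\ref{dom}. There, given $\lambda\not\ge\mu$, one produces a subgroup $H=S_{a_1}\times\cdots\times S_{a_k}$ (with $\lambda^\intercal=(a_1,\dots,a_k)$, each $S_{a_j}$ acting on the $j$-th column of a filled Young diagram of $\lambda$) that consumes $\lambda$ but not $\mu$. For the graph-restricted poset one needs, for every $\lambda\not\ge\mu$, a \emph{graph} $G$ on $n$ vertices with $\operatorname{Aut}(G)$ consuming $\lambda$ but not $\mu$. The obvious candidate is the disjoint union of cliques $G_\lambda=K_{a_1}\sqcup\cdots\sqcup K_{a_k}$: colouring the $j$-th clique by $1,2,\dots,a_j$ gives a distinguishing labelling of type $\lambda$, while any type-$\mu$ set partition must place two vertices of a common clique together, so the same transpose-of-Gale--Ryser argument used in Theorem~\ref{dom} shows $\operatorname{Aut}(G_\lambda)$ does not consume $\mu$ --- \emph{provided} $\operatorname{Aut}(G_\lambda)=S_{a_1}\times\cdots\times S_{a_k}$, which holds exactly when the column heights $a_j$ are pairwise distinct.

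The step I expect to be the real obstacle is handling repeated column heights, where $\operatorname{Aut}(G_\lambda)$ gains automorphisms permuting isomorphic cliques and may then fail to consume $\lambda$; one wants to break this extra symmetry while using exactly $n$ vertices and creating no automorphism that moves vertices between cliques, and the small number of vertices available makes this delicate. I believe that, when carried out, this plan in fact \textbf{disproves} the conjecture. Already at $n=4$: for $\lambda=(2,2)$ one has $\lambda^\intercal=(2,2)$, and the candidate $2K_2$ has $\operatorname{Aut}(2K_2)=D_4$, which contains the normal Klein four-group and hence, by Example~\ref{31}, does not consume $(2,2)$. Running through all eleven graphs on four vertices, the only graph automorphism groups that consume $(2,2)$ are the transposition subgroups $\langle(ab)\rangle$, and each of those also consumes $(3,1)$, via the set partition $\{\{a\},[4]\setminus\{a\}\}$. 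So every graph-realizable subgroup that consumes $(2,2)$ also consumes $(3,1)$; that is, $(2,2)\ge_c(3,1)$ in the graph-restricted order, even though $(2,2)\not\ge(3,1)$ in dominance. In fact the graph-restricted order on partitions of $4$ is the chain $(4)>_c(2,2)>_c(3,1)>_c(2,1,1)>_c(1,1,1,1)$, differing from the dominance chain exactly by the swap of $(2,2)$ and $(3,1)$.

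What I would try to prove instead: first, that the graph-restricted relation is still a partial order --- reflexivity and transitivity are immediate, but antisymmetry now needs a replacement for the dominance argument, for instance a numerical statistic monotone along $\ge_c$ (such as the minimum order of a graph-realizable subgroup consuming a given partition) together with a tie-breaking refinement. Second, a description of the \emph{extra} relations the restriction introduces: since the full consumption order already refines dominance, every new relation has the form $\lambda\ge_c\mu$ with $\lambda,\mu$ dominance-incomparable or dominance-reversed, and the $n=4$ picture suggests these are controlled by which coincidences a subgroup with a prescribed orbit structure is forced to produce. Third, isolating families of $\lambda,\mu$ (partitions with sufficiently many parts, or restrictions on $n$) for which the implication is recovered. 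A Frucht-type understanding of which finite groups arise as $\operatorname{Aut}(G)$ with a prescribed action on $n$ points appears to be the essential missing ingredient throughout.
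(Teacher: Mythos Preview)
The paper does not prove this statement; it is posed as an open conjecture in the ``Further directions'' section. Your proposal therefore cannot be compared to a paper proof, but it should be evaluated on its merits---and it in fact \emph{settles the conjecture in the negative}.

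Your counterexample at $n=4$ is correct. The graph-realizable subgroups of $S_4$ (up to conjugacy) are $S_4$, $D_4$, $S_3$, the non-normal Klein four $\langle(12),(34)\rangle$, a transposition $\langle(ab)\rangle$, and a double transposition $\langle(ab)(cd)\rangle$; the trivial group does not occur, since there is no asymmetric graph on four vertices. A direct check of all three $(2,2)$ set partitions against each of these groups shows that only the transposition subgroups consume $(2,2)$, and each $\langle(ab)\rangle$ also consumes $(3,1)$ via $\{\{a\},[4]\setminus\{a\}\}$. Hence $(2,2)\ge_c(3,1)$ in the graph-restricted order. The relation is strict because $\langle(14)(23)\rangle=\operatorname{Aut}(P_4)$ consumes $(3,1)$ (the singleton $\{1\}$ is moved) but not $(2,2)$ (every $2+2$ split is preserved setwise by $(14)(23)$). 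Since $(2,2)\not\ge(3,1)$ in dominance, the implication $\lambda\ge_c\mu\Rightarrow\lambda\ge\mu$ fails, disproving the conjecture.

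Your further claim that the graph-restricted order on partitions of $4$ is the chain $(4)>_c(2,2)>_c(3,1)>_c(2,1,1)>_c(1,1,1,1)$ also checks out: no graph-realizable subgroup consumes $(4)$ (there is no asymmetric $4$-vertex graph), so $(4)$ is vacuously maximal; the step $(3,1)>_c(2,1,1)$ is witnessed strictly by $S_3$, which consumes $(2,1,1)$ via $\{\{1,4\},\{2\},\{3\}\}$ but not $(3,1)$; and $(2,1,1)>_c(1^4)$ is witnessed strictly by $S_4$. One caution: your proposed program for proving antisymmetry of the graph-restricted relation is genuinely needed, since Theorem~\ref{dom} no longer applies---your $n=4$ computation shows the order is antisymmetric there, but a general argument remains to be found.
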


By the fact that the consumption ordering defines a poset, we know that the set of subgroups that a partition is consumed by completely determines it.  The converse is certainly not true, for conjugate subgroups will consume the same partitions.  However, it is not known whether this is the only way in which the converse statement fails.  This leads to the following question.

\begin{question}
What can be said about the relationship between two subgroups of $S_n$ which consume the same partitions?
\end{question}

Another series of natural questions regards the existence of a ``best fit" subgroup that consumes each partition:

\begin{conjecture}
For each partition $\lambda\vdash n$, is there a subgroup that consumes $\lambda$ and no partition greater than $\lambda$ in the consumption ordering? 
\end{conjecture}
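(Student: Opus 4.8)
The plan is to verify that the subgroup already built in the proof of Theorem~\ref{dom} does exactly what is asked. Fix $\lambda \vdash n$, choose any bijective filling $T$ of the Young diagram of $\lambda$ by $1, \dots, n$, let $C_1, \dots, C_m \subseteq [n]$ be its columns (so $|C_i| = \lambda^\intercal_i$), and set $H := S_{C_1} \times \cdots \times S_{C_m} \le S_n$, the Young subgroup permuting each column. I claim $H$ is a witness for $\lambda$: it consumes $\lambda$, but it consumes no partition $>_c \lambda$.

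First I would record that $H$ consumes $\lambda$. The rows of $T$ form a set partition of weight $\lambda$, and since two distinct cells of a Young diagram never share both a row and a column, each row meets each $C_i$ in at most one point; an element of $H$ fixing every row therefore fixes every point of every $C_i$, hence is the identity. (This is the first half of the proof of Theorem~\ref{dom}.)

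The substantive step --- and the one I expect to require the most care --- is to show that $H$ consumes $\mu$ only if $\lambda \ge \mu$ in the dominance order. Given a set partition $\{B_1, \dots, B_p\}$ of weight $\mu$ that distinguishes $H$, note that $|B_j \cap C_i| \le 1$ for all $i,j$: otherwise a transposition supported inside $B_j \cap C_i$ would be a nonidentity element of $H$ fixing every block. Thus $(|B_j \cap C_i|)_{j,i}$ is a $0/1$ matrix with row sums $\mu_1, \dots, \mu_p$ and column sums $\lambda^\intercal_1, \dots, \lambda^\intercal_m$, and the Gale--Ryser theorem forces $\mu \le (\lambda^\intercal)^\intercal = \lambda$ in dominance. (This is the second half of the proof of Theorem~\ref{dom}, spelled out; one could equally run the argument through Hall's marriage theorem, matching the columns $C_i$ to distinct blocks.)

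It then remains only to combine the two facts. If $\mu >_c \lambda$ then $\mu \ne \lambda$, and Theorem~\ref{dom} gives $\mu \ge \lambda$ in dominance, hence $\mu > \lambda$ strictly and $\lambda \not\ge \mu$; so the previous step shows $H$ does not consume $\mu$. Since $H$ does consume $\lambda$, this answers the conjecture affirmatively. I would also remark on what the construction does \emph{not} yield: $H$ in fact consumes every $\mu \le \lambda$ in dominance (Gale--Ryser, other direction), so it typically consumes many partitions incomparable to $\lambda$ in $>_c$, and the argument says nothing about the existence of a witness consuming \emph{only} the down-set of $\lambda$.
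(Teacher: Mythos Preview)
The paper does not prove this statement: it is listed as an open conjecture in the ``Further directions'' section, with no argument offered. Your proposal is therefore not competing against any proof in the paper.

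That said, your argument is correct, and in fact it shows that the conjecture is an immediate consequence of material already in the paper. The column Young subgroup $H = S_{C_1}\times\cdots\times S_{C_m}$ you describe is exactly the subgroup constructed in the proof of Theorem~\ref{dom}, and the two properties you isolate --- that $H$ consumes $\lambda$, and that $H$ consumes $\mu$ only when $\lambda \ge \mu$ in dominance --- are precisely what that proof establishes (your Gale--Ryser formulation makes the second step cleaner than the paper's somewhat terse ``taking the transpose'' sentence, but the content is the same). The final deduction is then a one-liner: if $\mu >_c \lambda$ then Theorem~\ref{dom} gives $\mu > \lambda$ in dominance, hence $\lambda \not\ge \mu$, hence $H$ does not consume $\mu$.

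So your contribution is the observation that the author's own construction already answers the conjecture affirmatively; nothing new is needed. Your closing remark is also apt: since $H$ consumes every $\mu$ with $\mu \le \lambda$ in dominance (the other direction of Gale--Ryser), it will typically consume partitions that are $\le_c$-incomparable to $\lambda$, so this witness says nothing about the stronger conjecture two paragraphs later asking for a subgroup that consumes exactly the $\le_c$-down-set of $\lambda$.
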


\begin{question}
Does the set of partitions a subgroup consumes always have a unique maximal element?
\end{question}

If the answer to the previous question is yes, then the following conjecture is equivalent to the previous conjecture; otherwise, it is a strengthening.

\begin{conjecture}
For each partition $\lambda\vdash n$, is there a subgroup that consumes $\lambda$, all partitions less than $\lambda$ in the consumption ordering, and no others? 
\end{conjecture}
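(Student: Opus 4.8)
The plan is to reformulate the question group-theoretically and then to build the required subgroup by refining a Young-subgroup construction with more rigid symmetry groups. For a subgroup $H\le S_n$ write $\mathrm{Cons}(H)$ for the set of partitions of $n$ that $H$ consumes. Two easy observations organize the problem. First, $\mathrm{Cons}$ is inclusion-reversing: if $H\le H'$ then any set partition that distinguishes $H'$ distinguishes $H$, so $\mathrm{Cons}(H')\subseteq\mathrm{Cons}(H)$. Second, $\mathrm{Cons}(H)$ is always a down-set in the consumption order — immediate from the definition of $>_c$ — so it is the down-set generated by its maximal elements. Hence the statement is equivalent to asking that for every $\lambda\vdash n$ there exist a subgroup $H$ for which $\lambda$ is the \emph{unique} maximal element of $\mathrm{Cons}(H)$; equivalently, that the finite family $\{\mathrm{Cons}(H):\lambda\in\mathrm{Cons}(H)\}$, whose intersection is tautologically $\{\mu:\mu\le_c\lambda\}$, have a least element.

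The first candidate is the Young subgroup appearing in the proof of Theorem~\ref{dom}: writing $\lambda^\intercal=(a_1,\dots,a_k)$, let $Y_\lambda=S_{a_1}\times\cdots\times S_{a_k}$ act on $[n]$ partitioned into blocks $B_1,\dots,B_k$ with $|B_i|=a_i$. Checking which elements of $Y_\lambda$ preserve every block of a candidate set partition, one sees that $Y_\lambda$ consumes $\mu$ precisely when there is a set partition of type $\mu$ meeting each $B_i$ in at most one point — that is, precisely when a $0$--$1$ matrix with row sums $\mu$ and column sums $\lambda^\intercal$ exists, which by the Gale--Ryser theorem happens exactly when $\mu\le\lambda$ in the dominance order. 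Thus $\mathrm{Cons}(Y_\lambda)$ is the dominance down-set of $\lambda$, and $Y_\lambda$ already proves the statement for every $\lambda$ at which the consumption and dominance down-sets coincide — in particular for $\lambda=(1^n)$ (where $Y_\lambda=S_n$) and $\lambda=(n)$ (where $Y_\lambda=\{e\}$), and for many intermediate $\lambda$ besides.

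It remains to handle those $\lambda$ for which the consumption down-set is strictly smaller; there $Y_\lambda$ over-consumes exactly the finitely many $\mu$ with $\mu\le\lambda$ but $\mu\not\le_c\lambda$, and for each such $\mu$ the hypothesis $\mu\not\le_c\lambda$ supplies a witness subgroup $K_\mu$ consuming $\lambda$ but not $\mu$. I would try to realize the intersection $\mathrm{Cons}(Y_\lambda)\cap\bigcap_\mu\mathrm{Cons}(K_\mu)=\{\nu:\nu\le_c\lambda\}$ by a single subgroup, either (a) inductively along the Hasse diagram of the consumption order, promoting a realizer of the down-set of a lower cover of $\lambda$, or (b) directly, realizing it as the automorphism group of a carefully designed edge- or hyperedge-coloured structure in the spirit of Propositions~\ref{sym2} and~\ref{V2} and Theorem~\ref{part}, tuned so that its unique rigid distinguishing partition has type $\lambda$ while all smaller types remain non-rigid. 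The small cases support this picture: in $S_4$ the two Klein four-subgroups realize the down-sets of $(3,1)$ and $(2,2)$, and in $S_6$ the group $A_4$ acting on the six edges of $K_4$ realizes the consumption down-set of $(4,2)$ exactly, while $S_3$ acting on three of the six points realizes that of $(4,1,1)$.

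The main obstacle is this combination step. Consuming $\lambda$ forces $H$ to be ``small'' — if the distinguishing set partition has block sizes $\lambda$ then $|H|\le[S_n:S_{\lambda_1}\times\cdots\times S_{\lambda_\ell}]$ — whereas failing to consume a given $\mu$ forces $H$ to be ``large'' enough to contain, for \emph{every} set partition of type $\mu$, a non-identity element preserving each of its blocks. Showing these two demands are always simultaneously satisfiable — equivalently, that given two subgroups consuming $\lambda$ there is a third consuming $\lambda$ whose consumption-set lies inside the intersection of the first two, so that the family above is downward directed and hence has a least element — is the crux, and I expect it to require either an explicit uniform family of groups or a structural description of which down-sets arise as $\mathrm{Cons}(H)$.
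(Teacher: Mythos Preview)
The statement you are attempting is listed in the paper as a \emph{conjecture} in the ``Further directions'' section; the paper gives no proof, so there is nothing to compare your argument to. What you have written is not a proof either, and you say so yourself: the ``combination step'' at the end is left open, and that step is the entire content of the conjecture.

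Concretely, your reformulation and the Gale--Ryser computation are correct and worth recording: $\mathrm{Cons}(Y_\lambda)$ is exactly the dominance down-set of $\lambda$, which sharpens the paper's Theorem~\ref{dom} from an implication to an exact description. Your small-case verifications in $S_4$ and $S_6$ also check out. But none of this touches the hard case, namely those $\lambda$ for which the consumption down-set is strictly smaller than the dominance down-set.

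The genuine gap is that you have no mechanism for the combination. You want, given $H$ and $H'$ both consuming $\lambda$, a third subgroup $H''$ consuming $\lambda$ with $\mathrm{Cons}(H'')\subseteq\mathrm{Cons}(H)\cap\mathrm{Cons}(H')$. Since $\mathrm{Cons}$ is inclusion-reversing, one would want $H''$ to \emph{contain} both $H$ and $H'$; but $\langle H,H'\rangle$ has no reason to still consume $\lambda$, and in general it will not (e.g.\ two conjugate Young subgroups can generate all of $S_n$). Your alternative route~(b), building $H$ as an automorphism group of a coloured hypergraph, is not an argument but a restatement: every subgroup of $S_n$ arises that way by Theorem~\ref{part}, so the question of \emph{which} structure to colour is exactly the original question. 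As it stands, the proposal is a clean framing of the problem together with a solution in the dominance-coincidence case, but the conjecture itself remains open, both here and in the paper.
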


In~\cite{Chan1}, Chan asked for a characterization of the following set:

\[
T_n=\{D_G([n]) ~|~ G \text{ is a transitive subgroup of } S_n\}.
\]

Since $D_G([n])$ is equal to the minimum length of the partitions of $[n]$ which $G$ consumes, we may ask the following stronger question.

\begin{question}
Characterize the set 
\[P_n=\{\lambda\vdash n ~|~ G \text{ is a transitive subgroup of } S_n \text{ that consumes } \lambda\}.\]
\end{question}

The following question is an enumerative one which arises from considering this poset.

\begin{question}
How many elements are in the union of the subgroups that consume a partition $\lambda$?  Equivalently, how many elements $g\in S_n$ have the property that if $g^i$ is the first power of $g$ in $a_{\lambda}$ where $a_{\lambda}$ is a Young projector, then $g^i$ is the identity?  
\end{question}

It is natural to combine this generalization with the two generalizations considered in this section and the previous one.  There are a myriad of interesting questions to be asked; here we present two open-ended ones.

\begin{question}
Can we relate the distinguishing partitions of $X^k$ to the distinguishing partitions of $X$?
\end{question}

\begin{question}
Which subgroups of $\Gamma$ can be obtained by requiring the elements to preserve some partition of a fixed type?
\end{question}

\subsection{Partitioning Cartesian powers of a set}

In Section~\ref{gen}, we looked at partitions of $X^k$ rather than simply partitions of $X$.  We saw that partitions of $\operatorname{Sym}^2(V)$ correspond to automorphism groups of undirected graphs.  This motivates the following question:

\begin{question}
Is there a meaningful way in which we can look at partitions of $\operatorname{Alt}^k(V)$ under this framework?
\end{question}  

In Proposition~\ref{char}, we show that if $\operatorname{Aut}(G)$ is 2-transitive, then $G$ is either $K_n$ or a graph with no edges.  This means that if we decompose the corresponding representation into irreducible representations, we will have more than 2 irreducible factors unless $\operatorname{Aut}(G)=S_n$.

\begin{question}
What can we say about the decomposition of the representation of $\operatorname{Aut}(G)$ into irreducible representations?
\end{question}

In particular, we conjecture that unless $G=S_n$, then the dimensions of the corresponding irreducible representations are generally small.

The following conjecture is motivated by the fact that only the symmetric and alternating groups are 6-transitive.
\begin{conjecture}
There exists some constant $c$ independent of $n$ such that the density of any subgroup $G\le S_n$ not equal to $A_n$ is at most $c$.
\end{conjecture}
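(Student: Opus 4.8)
The plan is to first recast density in the language of permutation-group closures, and then to \emph{disprove} the conjecture as stated by exhibiting an elementary family of counterexamples; the value $c$ cannot exist. For $G\le S_n$ write $G^{(k)}=\{g\in S_n : g \text{ maps every } G\text{-orbit on } X^k \text{ to itself}\}$, the $k$-closure of $G$. One checks that $G\subseteq G^{(k)}$, that $G^{(k)}$ decreases in $k$, and that $G^{(n-1)}=G$ by Theorem~\ref{part}. The first step is a short reduction: realizing $G$ as the stabilizer of \emph{some} partition $\pi$ of $X^k$ is equivalent to $G^{(k)}=G$. Indeed, any partition preserved by $G$ is coarser than the $G$-orbit partition $\mathcal{O}$ of $X^k$, and coarser partitions have larger stabilizers, so $\operatorname{Stab}(\mathcal{O})\subseteq\operatorname{Stab}(\pi)$; if $\operatorname{Stab}(\pi)=G$ this forces $\operatorname{Stab}(\mathcal{O})=G^{(k)}=G$, and conversely $\pi=\mathcal{O}$ works whenever $G^{(k)}=G$. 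Thus the density of $G$ is exactly $\min\{k : G^{(k)}=G\}$, the smallest $k$ for which $G$ is $k$-closed.

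The counterexample is the point stabilizer $G=A_{n-1}\le S_n$ (acting as the alternating group on $\{1,\dots,n-1\}$ and fixing $n$), and more generally $A_m$ acting naturally on an $m$-element subset $Y$ while fixing the remaining $n-m$ points. I claim each such $G$ has density exactly $m-1$, so that for $m=n-1$ the density is $n-2$, which is unbounded; since $A_{n-1}\ne A_n$, the conjecture fails. The argument has two halves. For the upper bound, on the set of $(m-1)$-tuples of distinct points of $Y$ the group $A_m$ has two orbits (the two parity classes), and any odd permutation interchanges them; hence no odd permutation lies in $A_m^{(m-1)}$, giving $A_m^{(m-1)}=A_m$ on $Y$. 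For the lower bound I would use that $A_m$ is exactly $(m-2)$-transitive: for every $k\le m-2$ the $A_m$-orbits on $k$-tuples of $Y$ coincide with the $S_m$-orbits (one orbit per equality pattern), so $A_m^{(k)}=S_m\ne A_m$, and $G$ cannot be realized below level $m-1$.

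To finish I must pass from the constituent $A_m$ on $Y$ to the embedded group $G$ in $S_n$. Here the $n-m$ fixed points do no harm: for each fixed point $p$ the constant tuple $(p,\dots,p)\in X^k$ is a singleton $G$-orbit, so any $g\in G^{(k)}$ must satisfy $g(p)=p$; thus every element of $G^{(k)}$ fixes all points outside $Y$, stabilizes $Y$, and restricts to an element of $A_m^{(k)}$ on $Y$. Consequently $G^{(k)}=A_m^{(k)}$ as subgroups of $S_n$, and the density of $G$ equals $m-1$. The only point requiring genuine (though still elementary) care is the exact transitivity degree of $A_m$ driving the lower bound, and the verification that mixed tuples involving fixed points impose no new constraints once $g$ fixes those points and preserves the pure $Y$-orbits; this is the crux of the disproof.

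Finally, I would record what survives. The obstruction is transparent: density blows up precisely when $G$ contains a large alternating group \emph{acting naturally} on one of its orbits. I therefore propose the corrected statement that density is bounded by an absolute constant for every $G\le S_n$ all of whose orbit constituents are free of large natural alternating (or symmetric-on-$(m-1)$-tuples) sections -- in particular for transitive, and a fortiori primitive, $G$ with $A_n\not\le G$. Reducing the transitive case to its primitive constituents via the same fixed-point/orbit bookkeeping is routine; the substantive remaining input is a uniform density bound for primitive groups not containing $A_n$, which is exactly where classification-of-finite-simple-groups information on multiply transitive and large-base primitive groups must enter, and which I regard as the true form of the author's intended result.
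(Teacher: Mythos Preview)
The paper offers no proof of this statement; it appears as an open conjecture in the final section, with only the motivating remark that no group other than $S_n$ or $A_n$ is $6$-transitive. There is nothing in the paper to compare your attempt against.

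Your argument is correct and constitutes a genuine disproof of the conjecture as written. The identification of the density of $G$ with $\min\{k:G^{(k)}=G\}$, where $G^{(k)}$ is the stabilizer in $S_n$ of the $G$-orbit partition on $X^k$, is valid: any $G$-invariant partition of $X^k$ is coarser than the orbit partition, hence has stabilizer containing $G^{(k)}$, so some partition at level $k$ has stabilizer exactly $G$ if and only if the orbit partition already does. Your counterexample $G=A_{n-1}\le S_n$ (alternating on $\{1,\dots,n-1\}$, fixing $n$) then works just as you say: the $(n-3)$-transitivity of $A_{n-1}$ forces $G^{(k)}\supseteq S_{n-1}$ for all $k\le n-3$ (the fixed point and the mixed tuples cause no trouble, exactly as you verify), while the two parity orbits on ordered $(n-2)$-tuples of distinct points from $\{1,\dots,n-1\}$ give $G^{(n-2)}=G$. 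Thus $G$ has density $n-2$, unbounded in $n$, and $G\ne A_n$.

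Your proposed repair --- restricting to transitive or primitive $G$ with $A_n\not\le G$ --- is the natural one and aligns with the author's own motivation via multiply transitive groups. Whether the transitive-to-primitive reduction is truly ``routine'' is less clear than you suggest (the $k$-closure of an imprimitive group is not straightforwardly controlled by the $k$-closures of its primitive components), but the primitive case is indeed where classification-strength bounds on multiply transitive and large-base groups would enter.
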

By the previously stated fact, if this conjecture is true, one would expect $c$ to be not much larger than 6.

\subsection{Distinguishing symmetric function}
The first natural question to ask deals with how strong of an invariant the distinguishing polynomial and distinguishing symmetric function are. 

\begin{question}
Can the size of the group, or the group action on a set itself be recovered from the corresponding distinguishing polynomial or distinguishing symmetric function?
\end{question}

After computing the distinguishing symmetric function for graphs up to seven vertices, the author was surprised to find that, with four exceptions, all of which occur in graphs with six vertices, that the DSF is Schur-positive.  This motivates the following question. 

\begin{question}
Is there a class of graphs for which the distinguishing symmetric function can naturally be seen to be Schur-positive?  In particular, can it be realized as the character of a representation of a symmetric group in appropriate cases?
\end{question}

\subsection{Distinguishing extension numbers}

In~\cite{Ferrara}, the authors define the distinguishing extension number of a group action.  This is an extension of the distinguishing number in the sense that it differentiates some group actions which may have the same distinguishing number.  Here we describe a way to generalize this notion in the spirit of Section~\ref{poset}.

Given a group $\Gamma$ acting faithfully on a set $X$, a \textit{fixing set} $W$ is a subset of $X$ with trivial pointwise stabilizer with respect to the given action of $\Gamma$.  Then the \textit{distinguishing extension number} is defined to be the minimum $m$ such that of all fixing sets $W$ of size at least $m$,  any labeling $\phi: X\backslash W\rightarrow\{1, 2, \ldots, k\}$ can be extended to a distinguishing labeling of $X$. 

Consider extending the definition of fixing set to partitions in the following way.  Take a subset $W\subseteq X$ of size $m$ and a partition $\lambda\vdash m$.  Then if there is a labeling $\phi$ of $W$ with type $\lambda$ such that the only element of $\Gamma$ that preserves $\phi$ is the identity, then we say that $W$ is a $\lambda$-fixing set of $X$. 

If we restrict $\lambda$ to only contain parts of size 1, then we recover the notions behind the distinguishing extension number.  Thus we may use the definition of $\lambda$-fixing sets to pursue these ideas in greater generality. 

\section*{Acknowledgments}
This research was conducted at the University of Minnesota
Duluth REU, funded by NSF Grant 1650947 and NSA Grant H98230-18-1-0010.  The author would like to thank Joe Gallian and Jason Schuchardt for making many helpful comments on earlier drafts of the paper.  The author would also like to  thank his advisors Levent Alpoge, Aaron Berger, and Colin Defant, his peers, and Joe again for creating a stimulating and supportive work environment at Duluth.


\begin{thebibliography}{9}
\bibitem{Alb1}
M. Albertson and K. Collins, Symmetry Breaking in Graphs, \textit{Elect. J. Comb.}, \textbf{3}
(1996) \#R18.

\bibitem{Alb2}
M. Albertson and D. Boutin, Distinguishing geometric graphs, \textit{J. Graph Theory}, \textbf{53} (2006) 135--150.

\bibitem{Ali}
S. Alikhani and S. Soltani, Distinguishing critical graphs. Preprint, (2017) \url{https://arxiv.org/abs/1712.00809}.

\bibitem{Bird}
Bird W., Myrvold W, Generation of Colourings and Distinguishing Colourings of Graphs, (2015). In: Dehne F., Sack JR., Stege U. (eds) Algorithms and Data Structures. WADS 2015. Lecture Notes in Computer Science, vol 9214. Springer, Cham.

\bibitem{Bog}
B. Bogstad and L. Cowen, The distinguishing number of the hypercube, \textit{Discrete Math.}, \textbf{283}, (2004) 29--35.

\bibitem{Chan1}
M. Chan,
The maximum distinguishing number of a group,
\textit{Electron. J. Combin.}, \textbf{13}, (2006) \#R70.

\bibitem{Chan2}
M. Chan, The distinguishing number of the direct product and wreath product action, \textit{J. Algebraic Combin.}, \textbf{24} (2006)
331--345.

\bibitem{Ferrara}
M. Ferrara, E. Gethner, S. Hartke, D. Stolee, P. Wenger, 
Extending precolorings to distinguish group actions,
\textit{European J. Combin.},
\textbf{72},
(2018)
12--28.

\bibitem{Imrich}
W. Imrich and S. Klavžar, Distinguishing Cartesian powers of graphs, \textit{J. Graph Theory}, \textbf{53}, (2006) 250--260.

\bibitem{Sharma}
Sharma R.P., Parmar R., Kapil V.S., Labeling of Sets Under the Actions of  $\overrightarrow{S_n}$ and  $\overrightarrow{A_n}$, (2016). In: Rizvi S., Ali A., Filippis V. (eds) Algebra and its Applications. Springer Proceedings in Mathematics \& Statistics, vol 174. Springer, Singapore.

\bibitem{Tymoczko}
J. Tymoczko,
Distinguishing numbers for graphs and groups,
\textit{Electron. J. Combin.}, \textbf{11}, (2004) \#R63.



\end{thebibliography}
\end{document}